\documentclass[10pt,twoside,final]{camc}
\usepackage{enumerate}
\usepackage{placeins}
\usepackage{siunitx}
\sisetup{group-separator={\,},group-minimum-digits=4,separate-uncertainty=true}
\DeclareSymbolFont{igrfm}{OML}{cmm}{m}{it}
\def\camcsym#1#2{\DeclareMathSymbol{#1}{\mathord}{igrfm}{"#2}}
\camcsym{\alpha}{0B}\camcsym{\beta}{0C}\camcsym{\gamma}{0D}\camcsym{\delta}{0E}
\camcsym{\epsilon}{0F}\camcsym{\zeta}{10}\camcsym{\eta}{11}\camcsym{\theta}{12}
\camcsym{\iota}{13}\camcsym{\kappa}{14}\camcsym{\lambda}{15}\camcsym{\mu}{16}
\camcsym{\nu}{17}\camcsym{\xi}{18}\camcsym{\pi}{19}\camcsym{\rho}{1A}
\camcsym{\sigma}{1B}\camcsym{\tau}{1C}\camcsym{\upsilon}{1D}\camcsym{\phi}{1E}
\camcsym{\chi}{1F}\camcsym{\psi}{20}\camcsym{\omega}{21}\camcsym{\varepsilon}{22}
\camcsym{\vartheta}{23}\camcsym{\varpi}{24}\let\varrho\rho\let\varsigma\sigma
\camcsym{\varphi}{27}\camcsym{\Gamma}{00}\camcsym{\Delta}{01}\camcsym{\Theta}{02}
\camcsym{\Lambda}{03}\camcsym{\Xi}{04}\camcsym{\Pi}{05}\camcsym{\Sigma}{06}
\camcsym{\Upsilon}{07}\camcsym{\Phi}{08}\camcsym{\Psi}{09}\camcsym{\Omega}{0A}
\let\le\leqslant\let\leq\leqslant\let\ge\geqslant\let\geq\geqslant
\let\emptyset\varnothing
\newcommand{\edmt}[3][\footnotesize]{
\par\noindent{#1\sffamily\bfseries #2~}{#1\rmfamily #3}\vskip.6\baselineskip}

\def\funding#1{\edmt{Funding}{#1}}
\def\dataava#1{\edmt{Data Availability}{#1}}
\def\complia#1{\edmt[]{Compliance with Ethical Standards}{}\edmt{Conflict of Interest}{#1}}
\def\ethical#1{\edmt{Ethical Approval}{#1}}
\AtBeginDocument{%
\nocite{label}

}

\hyphenation{di-men-sion-nal}

\newtheorem{Th}{Theorem}[section]
\newtheorem{Lem}[Th]{Lemma}
\newtheorem{Prob}[Th]{Problem}
\newtheorem{Prop}[Th]{Proposition}
\newtheorem{Rem}[Th]{Remark}

\begin{document}
\title{Mathematical and numerical study of symmetry and positivity of the tensor-valued spring constant defined from P1-FEM for two- and three-dimensional linear elasticity}
\author[addressref={aff1}, corref, 
email={ounissioussama@stu.kanazawa-u.ac.jp}]{Oussama Ounissi}%
\author[addressref={aff2},]{Masato Kimura}%
\author[addressref={aff2}]{Hirofumi Notsu}%
\address[id=aff1]{Graduate School of Natural Science and Technology, Kanazawa University, 920-1192, Ishikawa, Japan}%
\address[id=aff2]{Faculty of Mathematics and Physics, Kanazawa University, 920-1192, Ishikawa, Japan}%

\maketitle
\abstract{%
In this study, we consider a spring-block system that approximates a $d$-\!\! dimensional linear elastic body, where $d=2$ or $d=3$. We derive a $d\times d$ matrix as the spring constant using the P1 finite element method with a triangular mesh for the linear elasticity equations. We mathematically analyze the symmetry and positive-definiteness of the spring constant. Even if we assume full symmetry of the elasticity tensor, the symmetry of the matrix obtained as the spring constant is not trivial. However, we have succeeded in proving this in a unified manner for both 2D and 3D cases. This is an alternative proof for the 2D case in Notsu-Kimura (2014) and is a new result for the 3D case. We provide a necessary and sufficient condition for the spring constant to be positive-definite in the case of an isotropic elasticity tensor, along with a sufficient condition in terms of mesh regularity and the Poisson ratio. These theoretical results are supported by several numerical experiments. The positive-definiteness of the spring constant derived from the finite element method plays a vital role in fracture simulations of elastic bodies using the spring-block system.
}
\keywords{spring-block system $\cdot$ linear elasticity  $\cdot$ spring constant $\cdot$ finite element method}
\classification{35Q74 $\cdot$ 65N30  $\cdot$ 74B05  $\cdot$ 74R10}

\section{Introduction}
\setcounter{equation}{0}

Modeling fractures is an endeavor of great interest in several fields of science and engineering. Over the years, that interest has inspired researchers to develop many methods to study cracks, from highly sophisticated mathematical models to natural representations validated through experimentation. Due to its consistency with the theory of linear elasticity and its capability to handle complex domains, the finite element method (FEM) is a popular tool in modeling different problems in structural mechanics. It was extensively adapted with several schemes and used to study fracture mechanics. We specifically mention the extended finite element method (XFEM) and the particle discretization scheme finite element method (PDS-FEM) \cite{LL20, CB22}.

Besides the FEM, other discrete approaches such as the discrete element method (DEM), the rigid body spring model (RBSM), and the applied element method (AEM) have shown great aptitude for modeling fractures \cite{KK20, LX20, BE21, MC20, CA18}. These methods often divide the domain into small elements connected using springs. They offer a natural way of studying crack initiation and propagation by introducing a criterion for the springs to break, thus forming a new crack or propagating an existing one (Fig.~\ref{fig1}).

\begin{figure}[h]
    \centering    \includegraphics[width=0.5\textwidth]{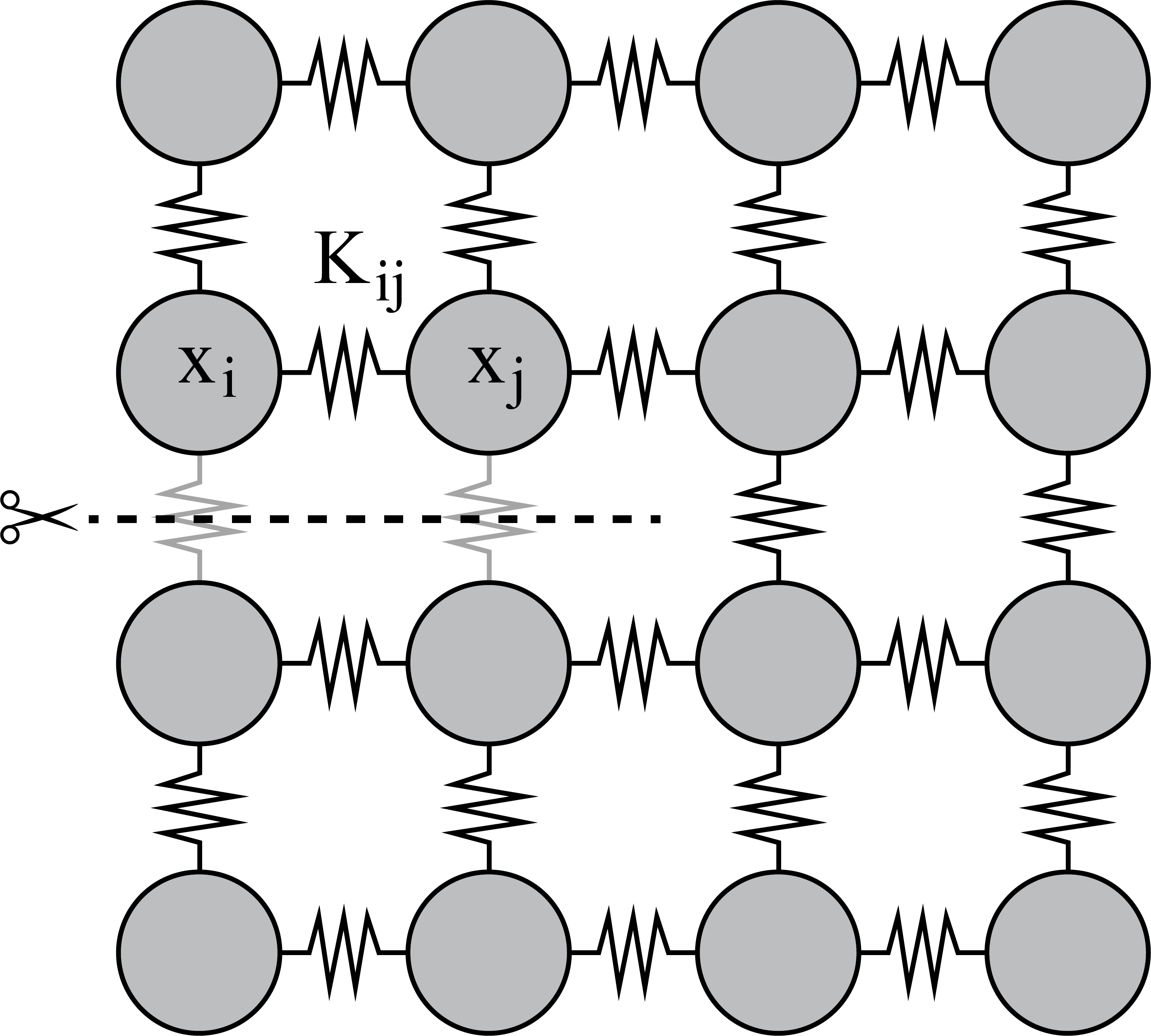}
    \caption{The process of modeling fractures with spring-block methods}
    \label{fig1}
\end{figure}

However, the choice of the spring constants is often made on an ad-hoc basis, meaning the springs are set in a way that results in the most natural simulation in a given scenario. In practice, a popular choice of the spring constants $K_{ij}$ is setting them as scalars or in the form $K_{ij} = k_{ij} I$, where $k_{ij}$ is a scalar and $I$ is the identity matrix. This choice often leads to not representing the Poisson effect \cite{MK05}. Particularly, in the standard lattice spring models, a 2-dimensional model with positive springs can only represent materials with a Poisson's ratio less than $1/4$ for plane strain and $1/3$ for plane stress; in 3D, it is restricted to materials with zero Poisson’s ratio \cite{CHEN14}. This restriction is similar to our findings for a tensor-valued spring constant derived from P1-FEM for linear elasticity. Moreover, in many cases, these models are not described sufficiently mathematically, which leaves much to be desired in terms of mathematical analysis, such as studying their well-posedness. It is also important to note that the consistency of the said models with the theory of linear elasticity is not always guaranteed.

In \cite{KN13}, we proposed a fracture model based on a spring-block system.
The well-posedness of the proposed model was shown by imposing conditions
on the spring constants. In the tensor-valued case, the existence of a unique
solution is guaranteed if all the spring constants are symmetric and positive-definite.
However, the consistency with the linear elasticity theory was still in question.

This question was addressed in \cite{NK14}, and we answered it by deriving a spring-block system model from the P1-FEM discretization for the equation of linear elasticity.
In a loose sense, in this approach, every block corresponds to a nodal point in the FEM mesh, and every spring corresponds to the set of mesh elements that share two nodal points. In particular, a spring constant in 2D is given by a sum of domain integrals over two adjacent triangle elements if at least one of the two nodal points is not on the boundary. In \cite{NK14}, by utilizing the fact above and under the assumption of the homogeneity of the elasticity tensor, the symmetry of the spring constants was proved in the two-dimensional case. Furthermore, a condition was proposed for an isotropic material under which the spring constants are positive-definite in 2D.

We suppose that the elasticity tensor of the elastic material under consideration is given by $C=(c_{pqrs})$ for $p,q,r,s=1,\cdots,d$ with the major symmetry
$c_{pqrs}=c_{rspq}$ and the minor symmetry 
$c_{pqrs}=c_{qprs}=c_{pqsr}$.
We remark that the above symmetry of the spring-constant matrix
corresponds to another symmetry of the form "$c_{pqrs}=c_{rqps}$".
Even in the isotropic case,
the symmetry of the form "$c_{pqrs}=c_{rqps}$" holds only if $\lambda =\mu$, where $\lambda$ and $\mu$ are  the Lam\'{e} constants. So, the symmetry of the spring-constant matrix can not be derived as a simple consequence of the major and minor symmetries of the elasticity tensor.

In this paper, we seek to extend what was carried out in \cite{NK14} to three dimensions. However, the arguments in \cite{NK14} cannot be applied in 3D, since the number of tetrahedral elements corresponding to a given spring is unknown a priori, unlike the two-dimensional case.
To ensure the solvability of the spring-block system derived from P1-FEM, we offer a unified proof for the symmetry of the spring constants
in two and three dimensions under the homogeneity assumption. 
Additionally, under the isotropy assumption, we propose a local condition on the FEM mesh that guarantees the positive-definiteness of the spring constant. 

This paper is organized as follows. In Sections \ref{sec2} and \ref{sec3}, we briefly present a formal representation of a spring-block system problem and show that the P1-FEM formulation of linear elasticity could be interpreted as such. The two sections merely explain the mathematical setting and the previous results, which were studied comprehensively in \cite{KN13} and \cite{NK14}. Sections \ref{sec4} and \ref{sec5} are dedicated to studying the properties of the spring constant derived from P1-FEM for linear elasticity. We start in Section~\ref{sec4} by presenting a unified proof for the symmetry in 2 and 3 dimensions. Then, in Section~\ref{sec5}, we discuss the conditions under which a spring constant is positive-definite. In particular, for isotropic elasticity, we give a necessary and sufficient condition for the positive-definiteness of the spring constant, and also a sufficient condition in terms of mesh regularity and the Poisson ratio. In Section~\ref{sec6}, we offer a numerical examination of the spring constants derived from some sample meshes. Section~\ref{sec7} serves as a conclusion.

\newpage 

\section{spring-block system}\label{sec2}
\setcounter{equation}{0}

\subsection{Block division}

Let $d = 2$ or $3$ and let $\Omega \subset \mathbb{R}^d$ be a $d$-dimensional bounded Lipschitz domain. We introduce a block division $\mathcal{D} = \{ D_i \}_{i = 1}^N$ of $\Omega$, that satisfies $ N  \in \mathbb{N}$, 
$\overline{\Omega} =\cup_{i \in \{1,\dots,N\}} \overline{D_i}$, $D_i \cap D_j = \emptyset$ $(i\neq j)$,
where each subblock $D_i$ is a non-empty connected open subset of $\mathbb{R}^d$ with a Lipschitz boundary. For $i,j \in \{1,\dots,N\}$ with $i\neq j$, we define $d_{ij}:=\mathcal{H}^{d-1}(\overline{D_i} \cap \overline{D_j})$,
where $\mathcal{H}^{d-1}$ is the $(d-1)$-Hausdorff measure, that means, the length for $d=2$ and the area for $d=3$. 
For every subblock $D_i \in \mathcal{D}$, we define the set of indices of all its neighboring subblocks by
$\Lambda_i := \{ j \in \{1,\dots, N \}\setminus\{i\} ,~d_{ij} > 0\}$.
We also define the set of all pairs of indices corresponding to two neighboring subblocks as   
$\Lambda := \{ (i,j); 1 \leq i < j \leq N, ~d_{ij} > 0  \}$.
    
\subsection{spring-block system formulation}
This paper focuses on the vector-valued displacements, vector-valued forces, and tensor-valued (matrix-valued) spring constants since the scalar case has already been studied thoroughly in \cite{NK14}.

For a block division $\mathcal{D}$ of $\Omega$, we consider a piecewise constant displacement field $u = \sum_{i=1}^N u_i \chi_i \in V(\mathcal{D})^d$, where
\begin{align*}
\chi_i(x) &:= \left\{\begin{array}{l}
          1 \quad x \in D_i \\
          0 \quad x \in \Omega \backslash D_i
    \end{array}
    \right. \quad (i=1,\dots,N), \\
    V(\mathcal{D})^d &:= \bigl\{ v \in L^\infty(\Omega); v = \sum_{i =1}^N  v_i\chi_i,~ v_i \in \mathbb{R}^d \bigr\}.
\end{align*}

For each $(i,j) \in \Lambda$, we consider a virtual spring between an adjacent pair of subblocks $D_i$ and $D_j$, and we denote its spring constant by $K_{ij}\in \mathbb{R}^{d\times d}$.
We suppose that the virtual spring obeys the generalized Hooke's law. Meaning, the force from $D_j$ acting on $D_i$ 
is assumed to be given by $K_{ij}(u_j-u_i)$. From the action-reaction law, we impose that $K_{ij} = K_{ji}$. We denote
\begin{equation}\label{def-K}
K=\{K_{ij}\}_{(i,j)\in\Lambda}~\text{with}~K_{ij} = K_{ji}\in \mathbb{R}^{d\times d}.
\end{equation}
For a block division $\mathcal{D}$ of $\Omega$ and a set of spring constants $K$ with \eqref{def-K},
we call $(\mathcal{D},K)$ a tensor-valued spring-block system.

When considering a boundary value problem of linear elasticity, we often need to set a Dirichlet boundary condition. We suppose $J_1$ is a non-empty set of indices corresponding to the subblocks where the displacement is given a priori. The force balance is considered at the remaining subblocks with corresponding indices in $ J_0$. Hereafter, we denote $J = (J_0, J_1)$ under the assumptions:
\begin{equation*}
   J_0\cup J_1 = \{1,\dots,N\},\quad
   J_0\cap J_1 =\emptyset, \quad J_0 \neq \emptyset, \quad J_1 \neq \emptyset.
\end{equation*}
We also define
\begin{equation*}
    V_\ell(\mathcal{D})^d := \bigl\{ v \in L^\infty(\Omega); v = \sum_{i \in J_\ell}  v_i\chi_i, \quad v_i \in \mathbb{R}^d \bigr\}\quad (\ell =0,1).
\end{equation*}
Similarly, we call $(\mathcal{D}, K, J)$ a tensor-valued spring-block system with a Dirichlet boundary.
\begin{Prob}\label{prob1}
Let $(\mathcal{D}, K, J)$ be a tensor-valued spring-block system with a Dirichlet boundary. For a given body force $F = \sum_{i \in J_{0}} F_i \chi_i \in V_0(\mathcal{D})^d$, and a given displacement $g = \sum_{i \in J_{1}} g_i \chi_i \in V_1(\mathcal{D})^d$, find a displacement $u = \sum_{i = 1}^N u_i \chi_i \in V(\mathcal{D})^d$ such that
\begin{align}\label{eq:prob1}
\begin{cases}
    \displaystyle{\sum_{j\in \Lambda_i} K_{ij}(u_j-u_i) + F_i = 0}&(i \in J_0),\\
    u_i= g_i, & (i \in J_1). 
\end{cases}
\end{align}
\end{Prob}
We introduce the following bilinear form and elastic energy for Problem~\ref{prob1}
\begin{align*}
    (u,v)_K &:= \sum_{(i,j) \in \Lambda} \{ K_{ij}(u_j-u_i)\}\cdot (v_j-v_i)\quad (u,v \in V(\mathcal{D})^d),\\
    E_{\rm el}(u) &:= \frac{1}{2} (u,u)_K - \sum_{i \in J_0} F_i \cdot u_i
    \quad (u\in V(\mathcal{D})^d).
\end{align*}
In \cite{KN13, NK14}, the solvability of Problem~\ref{prob1} is discussed extensively. It was shown that there exists a unique solution to Problem~\ref{prob1}, if every block $(D_i, i\in J_0)$ is connected to a block $(D_j, j \in J_1)$ by a chain of springs with positive-definite spring constants (see \cite{KN13, NK14} for the precise definition). Moreover, the use of the spring-block system for fracture mechanics is usually done by cutting the spring connecting two subblocks when the strain or the damage reaches a certain threshold. If the said spring has a constant that is not nonnegative-definite, the energy dissipation of the model may not hold. Accordingly, both the solvability and energy dissipation properties are guaranteed if $K_{ij} \in \mathbb{R}^{d \times d}_{\rm sym}$ and $K_{ij}>O$ for all $(i,j)\in\Lambda$.

\section{P1-FEM based spring-block system for linear elasticity}\label{sec3}
\setcounter{equation}{0}
Based on the idea of \cite{NK14}, we consider a P1-FEM for a linear elasticity problem with a triangular mesh on $\Omega$
to construct a spring-block system that is consistent with the given linear elasticity system.

Let $\Omega \subset \mathbb{R}^d$ be a domain with a Lipschitz
boundary denoted by $\Gamma$.
For simplicity, we suppose that $\Gamma$ is a polygon for 2D or a polyhedron for 3D. We assume that $g \in C^0(\overline\Omega)^d \cap H^1(\Omega)^d $, $f \in L^2(\Omega)^d$, and $c_{pqrs} \in L^\infty(\Omega)$ for $\ p,q,r,s \in \{1,\dots, d\})$ are given functions. Here, and in the following, we denote a function space valued in ${\mathbb R}^d$, such as $H^1(\Omega;{\mathbb R}^d)$, by $H^1(\Omega)^d$.

We assume that the elasticity tensor $C=(c_{pqrs})(x)$ satisfies the following symmetries, the major symmetry
$c_{pqrs}=c_{rspq}$ and the minor symmetry 
$c_{pqrs}=c_{qprs}=c_{pqsr}$ for $p,q,r,s\in\{1,\cdots,d\}$.
In case the elasticity tensor satisfies the major and minor symmetries, it is said that it satisfies the full symmetry. We additionally assume that $C=(c_{pqrs}(x))$ is uniformly positive-definite; i.e., there exists $c_*>0$ such that
\begin{align*}
c_{pqrs}(x)\xi_{pq}\xi_{rs}\ge c_* \vert \xi\vert^2
\quad \xi\in \mathbb{R}^{d \times d}_{\rm sym},
~\mbox{a.e.}~x\in\Omega,
\end{align*}
where and hereafter, we use the Einstein notation for the summation.

For a displacement $u =(u_1,\cdots,u_d)^T\in H^1(\Omega)^d$, 
we define the strain tensor $e[u](x)=(e_{pq}[u](x))\in \mathbb{R}^{d \times d}_{\rm sym}$, and the stress tensor 
$\sigma [u](x)=(\sigma_{pq}[u](x))\in \mathbb{R}^{d \times d}_{\rm sym}$
by
\begin{align*}
e_{pq}[u] := \frac{1}{2}
    \left( u_{p,q} + u_{q,p}\right),\quad
\sigma_{pq}[u] := c_{pqrs} e_{rs}[u],
\end{align*}
where $\varphi_{,p}$ represents the partial derivative of a function $\varphi(x)=\varphi (x_1,\cdots,x_d)$ with respect to the variable $x_p$.

We consider the following boundary value problem of linear elasticity.
\begin{align}\label{ep}
\begin{cases}
-\mbox{\rm div}\sigma[u] =f\quad &\mbox{in}~\Omega,\\
u=g\quad &\mbox{on}~\Gamma,
\end{cases}
\end{align}
where $(\mbox{\rm div}\sigma)_p=\sigma_{pq,q}$
for $p=1,\cdots,d$.

We define the bilinear form:
\begin{align*}
    a(u,v) &= \int_\Omega \sigma(u):e(u)dx \quad
    (u,v\in H^1(\Omega)^d),
\end{align*}
and the affine function space $V(g)$ as
\begin{equation*}
    V(g) := \{ v \in H^1(\Omega)^d;~v = g \ {\rm on} \ \Gamma \}.
\end{equation*}
We also denote the innerproduct of $L^2(\Omega)^d$ by $\langle \cdot,\cdot\rangle$.
We consider the weak formulation of \eqref{ep} as follows.
\begin{Prob}\label{prob2}
Find $u \in V(g)$ that satisfies
\begin{equation*}\label{ewf}
    a(u,v) = \langle f,v \rangle \quad
    (v \in V(0)).
\end{equation*}
\end{Prob}

We consider a piecewise linear finite element method on a triangular mesh of $\Omega$. The triangulation is denoted by $\mathcal{T}_h = 
\{T^\alpha\}_{\alpha=1}^{N_e}$, where $T^\alpha$ is a triangular element (a closed triangle or tetrahedron including the interior and boundary) and $N_e \in \mathbb{N}$ is the number of the triangular elements.
We denote the set of nodal points of $\mathcal{T}_h$ by $\mathcal{N}_h = \{P_j\}_{j=1}^{N_p}$, 
where $N_p \in \mathbb{N}$ is the number of nodal points.
We define $J=(J_0,J_1)$ as
\begin{equation}\label{FEMJ0J1}
J_0 := \{ j \in \{1,\dots,N_p\};~P_j \in \Omega \},\quad 
J_1 := \{ j \in \{1,\dots,N_p\};~P_j \in \Gamma \}.
\end{equation}

We also define the function space of FEM by 
\begin{align*}
P_1(\mathcal{T}_h) :=\{
v_h\in C^0(\overline\Omega);~v_h\vert_{T^\alpha}
\mbox{ is linear}~(\alpha=1,\cdots,N_e)\},
\end{align*}
and a basis of $P_1(\mathcal{T}_h)$ by
$\langle \varphi_1,..,\varphi_{N_p} \rangle$, where $\varphi_j\in P_1(\mathcal{T}_h)$ with $\varphi_j(P_i) = \delta_{ij} $.
We denote the vector-valued finite element space 
by $P_1(\mathcal{T}_h)^d$, and also introduce
an affine subspace of $P_1(\mathcal{T}_h)^d$, in which we will seek a finite element solution, as
\begin{equation*}
    V_h(g) := \{ v_h \in P_1(\mathcal{T}_h)^d;
    ~v_h (P_i)= g(P_i)~(i\in J_1) \}.
\end{equation*} 
A standard P1-FEM for Problem~\ref{prob2} is defined as follows:
\begin{Prob}\label{prob3}
Find $u_h \in V_h(g)$ that satisfies 
\begin{equation*}
    a(u_h,v_h) = \langle f,v_h \rangle\quad (v_h \in V_h(0)).
\end{equation*}
\end{Prob}
From the well-known Poincar\'e and Korn inequalities, it follows that the bilinear form $a(\cdot,\cdot)$ is coercive on $V(0)=H^1_0(\Omega)^d$ \cite{D-L1976,FEM1}. Therefore, by the Lax-Milgram theorem, Problem~\ref{prob2} and Problem~\ref{prob3} have unique solutions, 
and the convergence of $u_h$ to $u$ is shown by Cea's lemma under a certain regularity condition for the mesh refinement \cite{FEM1, FEM2}.

Problem \ref{prob3} is equivalent to finding $u_h \in V_h(g)$ that satisfies 
\begin{equation}\label{feme}
    a(u_h,\varphi_i e_k) = \langle f,\varphi_i e_k \rangle \quad 
    (i \in J_0,~k \in \{1, \dots, d\}),
\end{equation}
where $\{e_k\}_{k = 1}^d$ is the standard basis of $\mathbb{R}^d$.

The next proposition states that the finite element method given by Problem~\ref{prob3} is interpreted as a spring system.
We set $N:=N_p$, and, for $i,j\in \{1,\cdots,N\}$, we define $K_{ij}=(K_{ij}^{kl})_{k,l=1,\cdots,d}\in \mathbb{R}^{d \times d}$ by
\begin{equation}\label{K-FEM}
    K_{ij}^{kl} := -a(\varphi_j e_l,\varphi_i e_k).
\end{equation}
We remark that $K_{ij}=K_{ji}^T$ holds, since the bilinear
form $a(\cdot,\cdot)$ is symmetric:
\begin{align*}
  K_{ij}^{kl}=-a(\varphi_j e_l,\varphi_i e_k)
  = - a(\varphi_i e_k,\varphi_j e_l)=K_{ji}^{lk}.
\end{align*}

We set 
\begin{align}\label{Lambda-FEM}
\Lambda_i:=\{j\in \{1,\cdots, N\}\setminus \{i\};\,K_{ij}\neq O\}
\quad (i\in\{1,\cdots, N\}) .
\end{align}
We also define
$F_i=(F_i^k)_{k=1,\dots,d} \in \mathbb{R}^d$,
and $g_i \in \mathbb{R}^d$ by
\begin{equation}\label{Fg}
F_{i}^k := \langle f,\varphi_i e_k \rangle ~(i\in J_0),
    \quad g_i := g(P_i)~(i\in J_1).
\end{equation}
\begin{Prop}\label{equiv}
Under the settings $u_h = \sum_{i=1}^{N_p}u_i\varphi_i\in P_1(\mathcal{T}_h)^d$, \eqref{FEMJ0J1}, \eqref{K-FEM}, \eqref{Lambda-FEM}, and \eqref{Fg},
$u_h$ is a solution to Problem~\ref{prob3}
if and only if \eqref{eq:prob1} holds.
\end{Prop}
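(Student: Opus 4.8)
The plan is to unwind both sides of the asserted equivalence into conditions on the nodal coefficients $u_1,\dots,u_{N_p}\in\mathbb{R}^d$ of $u_h=\sum_{i=1}^{N_p}u_i\varphi_i$ and then match them term by term. Writing $g_i=g(P_i)$ as in \eqref{Fg}, the condition $u_h\in V_h(g)$ is, by the definition of $V_h(g)$, equivalent to $u_i=g_i$ for every $i\in J_1$, which is exactly the second line of \eqref{eq:prob1}. It therefore suffices to show that, under this Dirichlet constraint, the variational identity of Problem~\ref{prob3} is equivalent to the first line of \eqref{eq:prob1}.

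By the remark preceding \eqref{feme}, Problem~\ref{prob3} amounts to requiring $u_h\in V_h(g)$ together with \eqref{feme}. Writing $u_j=\sum_{l=1}^{d}u_j^l e_l$, expanding $u_h=\sum_{j=1}^{N_p}\sum_{l=1}^{d}u_j^l\,\varphi_j e_l$, using the bilinearity of $a(\cdot,\cdot)$, and invoking \eqref{K-FEM}, the left-hand side of \eqref{feme} becomes
\[
a(u_h,\varphi_i e_k)=\sum_{j=1}^{N_p}\sum_{l=1}^{d}u_j^l\,a(\varphi_j e_l,\varphi_i e_k)=-\sum_{j=1}^{N_p}\bigl(K_{ij}u_j\bigr)^k,
\]
so that, with $F_i$ as in \eqref{Fg}, the identity \eqref{feme} is equivalent to $\sum_{j=1}^{N_p}K_{ij}u_j+F_i=0$ for all $i\in J_0$.

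The one genuinely non-formal ingredient, which I would isolate as the pivotal step, is the identity $\sum_{j=1}^{N_p}K_{ij}=O$, valid for every $i$. It follows from the partition-of-unity property $\sum_{j=1}^{N_p}\varphi_j\equiv1$ on $\overline\Omega$: the field $\sum_{j=1}^{N_p}\varphi_j e_l$ is the constant field $e_l$, whose strain tensor vanishes, so by the definition of $a(\cdot,\cdot)$ and \eqref{K-FEM},
\[
\sum_{j=1}^{N_p}K_{ij}^{kl}=-a\Bigl(\sum_{j=1}^{N_p}\varphi_j e_l,\ \varphi_i e_k\Bigr)=-a(e_l,\varphi_i e_k)=0 .
\]
In particular $K_{ii}=-\sum_{j\neq i}K_{ij}=-\sum_{j\in\Lambda_i}K_{ij}$, the last equality because $K_{ij}=O$ for $j\notin\Lambda_i\cup\{i\}$ by \eqref{Lambda-FEM}. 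Hence
\[
\sum_{j=1}^{N_p}K_{ij}u_j=\sum_{j\in\Lambda_i}K_{ij}u_j+K_{ii}u_i=\sum_{j\in\Lambda_i}K_{ij}(u_j-u_i),
\]
so the equation $\sum_{j=1}^{N_p}K_{ij}u_j+F_i=0$ obtained above becomes precisely $\sum_{j\in\Lambda_i}K_{ij}(u_j-u_i)+F_i=0$ for $i\in J_0$, i.e.\ the first line of \eqref{eq:prob1}. Every step in this chain is an equivalence, so reading it backwards yields the converse. I do not anticipate any real difficulty; the only point requiring care is the bookkeeping of the diagonal term $K_{ii}$ through the partition of unity.
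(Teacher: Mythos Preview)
Your proposal is correct and follows essentially the same route as the paper: both arguments expand $a(u_h,\varphi_ie_k)$ via \eqref{K-FEM}, invoke the partition of unity $\sum_j\varphi_j=1$ to obtain $\sum_j K_{ij}=O$ and hence $K_{ii}=-\sum_{j\in\Lambda_i}K_{ij}$, and then rewrite the resulting sum as $\sum_{j\in\Lambda_i}K_{ij}(u_j-u_i)$. Your treatment is slightly more explicit than the paper's in separating out the Dirichlet condition $u_i=g_i$ for $i\in J_1$, but the substance is identical.
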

\begin{proof}
Since $\sum_{j=1}^{N_p}\varphi_j=1$ and 
\begin{align*}
  0=a(e_l, \varphi_ie_k)=\sum_{j=1}^{N_p}a(\varphi_je_l, \varphi_ie_k)
  =-\sum_{j=1}^{N_p}K_{ij}^{kl},
\end{align*}
we obtain
\begin{align}\label{Kiij}
  K_{ii}=-\sum_{j\neq i}K_{ij}=-\sum_{j\in \Lambda_i}K_{ij}\quad (i\in \{1,\cdots,N_p\}).
\end{align}

For $i\in \{1,\cdots,N_p\}$, setting $u_i^l:=u_i\cdot e_l$, we have $u_i=\sum_{l=1}^du_i^le_l$.
We write $u_h$ as
\begin{align*}
u_h=\sum_{j=1}^{N_p}\sum_{l=1}^du_j^l\varphi_ie_l.
\end{align*}

For any $i \in J_0$ and $k \in \{1, \dots, d\}$,
applying the equality \eqref{Kiij}, we have
\begin{align*}
\langle f,\varphi_i e_k \rangle -a(u_h,\varphi_i e_k) 
&=
F_i^k -\sum_{j=1}^{N_p}\sum_{l=1}^du_j^la(\varphi_je_l,\varphi_i e_k)\\
&=
F_i^k +\sum_{j=1}^{N_p}\sum_{l=1}^dK_{ij}^{kl}u_j^l\\
&=e_k\cdot\left(F_i+ \sum_{j=1}^{N_p}K_{ij}u_j\right)\\
&=e_k\cdot\left(F_i+ \sum_{j\neq i}K_{ij}u_j+K_{ii}u_i\right)\\
&=e_k\cdot\left(F_i+ \sum_{j\in \Lambda_i}K_{ij}(u_j-u_i)\right).
\end{align*}
This implies the equivalency between \eqref{eq:prob1} and Problem~\ref{prob3}.
\end{proof}

\begin{Rem}\label{rem:bd}
To construct a tensor-valued spring-block system with a Dirichlet boundary
$(\mathcal{D}, K, J)$ from Proposition~\ref{equiv}, we have to consider 
an admissible block division, and also have to show the symmetry $K_{ij}=K_{ji}$ and its positivity $K_{ij}> 0$. 

A candidate of such divisions is the barycentric domain based on the FEM mesh shown in Figure~\ref{fig2a} 
(see \cite{FEM1} for the barycentric coordinate). 
For an acute mesh in 2D (i.e., all $T^\alpha \in \mathcal{T}_h$ is an acute triangle), the Voronoi diagram satisfying 
\begin{equation*}
    D_i = \{x\in \Omega; ~ \lvert  x - P_i \rvert \leq \lvert x - P_j \rvert  ~\forall j \in \{1,\dots,N_p\}\backslash \{i\}\}~~(i \in \{1,\dots,N_p\}), 
\end{equation*}
is also admissible (Figure~\ref{fig2b}).

The symmetry of the spring constant will be shown in 
Theorem~\ref{th1}. We will also find that the positivity 
holds under a certain condition 
but not always true (Theorem~\ref{th3}).
\end{Rem}
\begin{figure}[ht]
    \centering
    \subfloat[Barycentric]{\label{fig2a}\includegraphics[width=.48\linewidth]{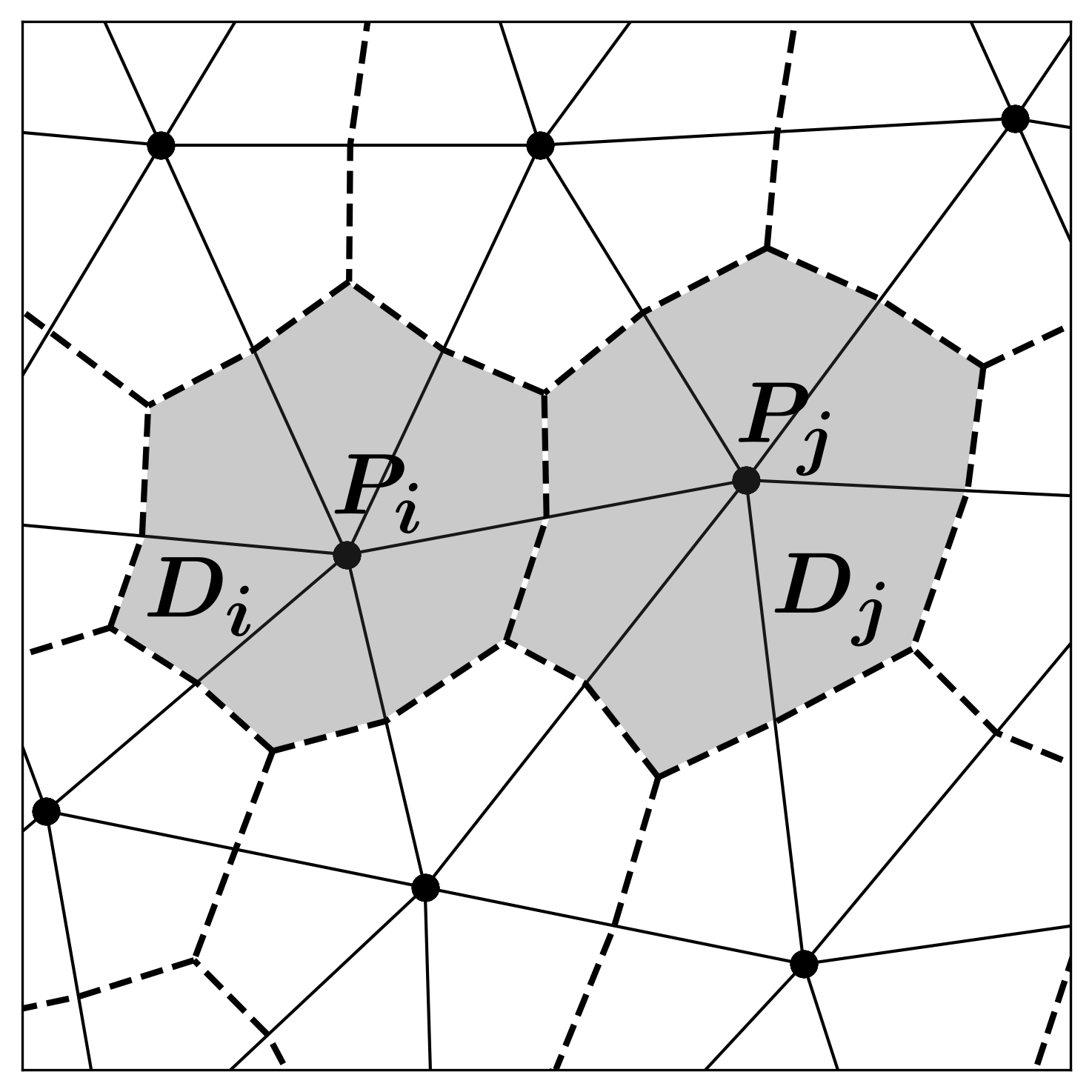}}\quad
    \subfloat[Voronoi]{\label{fig2b}\includegraphics[width=.48\linewidth]{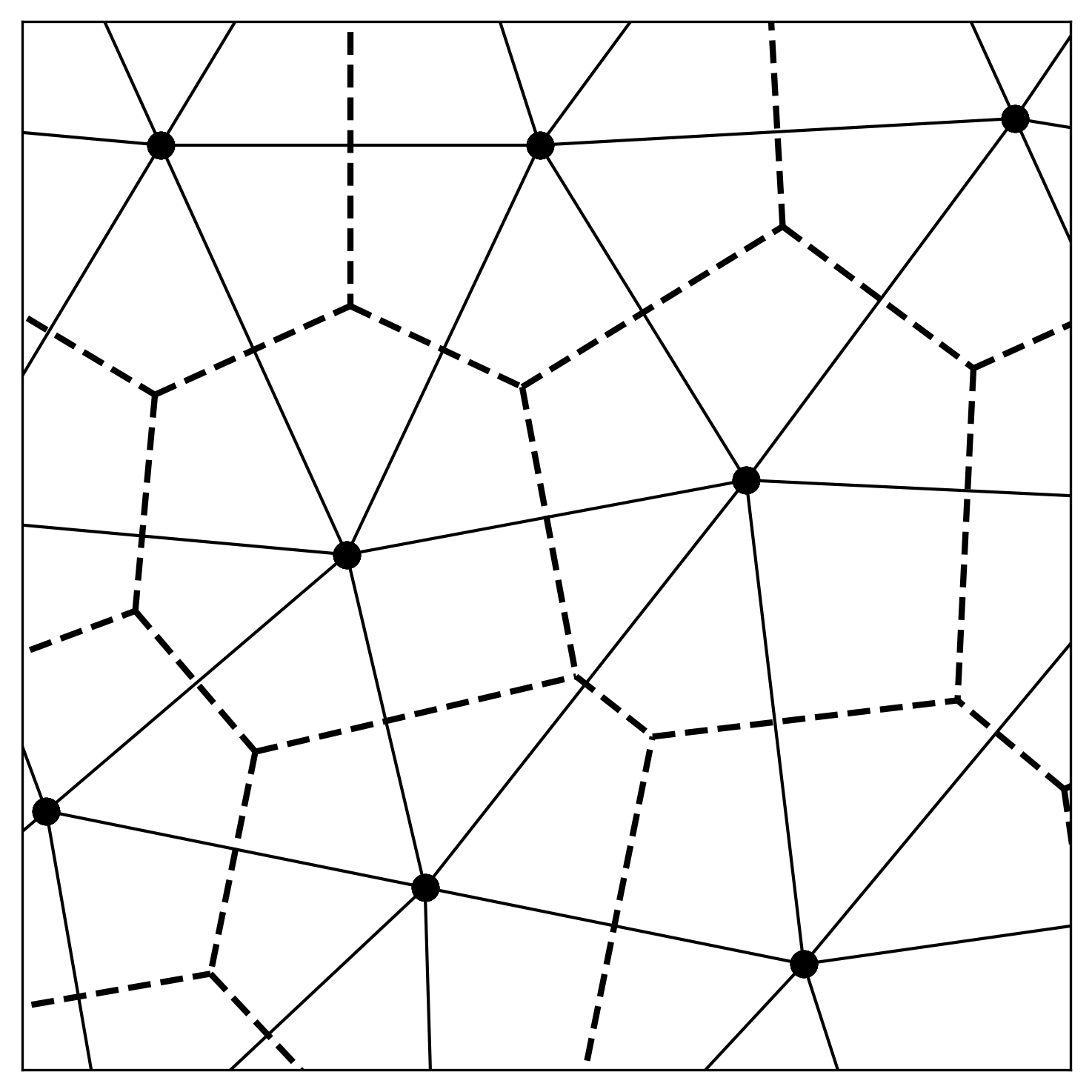}}
    \caption{Block divisions based on a FEM triangular mesh}
    \label{fig2}
\end{figure}

\section{Symmetry of the spring constant}\label{sec4}

\setcounter{equation}{0}

In this section, we offer a unified proof of the symmetry of the spring constant derived from the P1-FEM scheme in two and three dimensions, i.e., $K_{ij}\in \mathbb{R}^{d\times d}_{\rm 
sym}$.
In all of the following, we assume the homogeneity of the elasticity tensor. 
Let $\Gamma^\beta$ denote an edge of a triangle $T\in \mathcal{T}_h$ for $d=2$ or a face of 
a tetrahedron $T\in \mathcal{T}_h$ for $d=3$, and assume that $\Gamma^\beta$ is a closed set. Furthermore, define $n^\beta=(n^\beta_1,\cdots,n^\beta_d)^T\in {\mathbb R}^d$ as one of the normal unit vectors on $\Gamma^\beta$.
We denote the set of all the edges/faces on $\mathcal{T}_h$ by $\mathcal{B}_h = \{\Gamma^\beta \}_{\beta=1}^{N_b}, N_b \in \mathbb{N}$. 

For $d = 3$, we use the standard vector cross product $a \times b$ for vectors $a, b \in {\mathbb R}^3$.
For $d = 2$, we define the scalar-valued cross product by
\[
a \times b := a_1 b_2 - a_2 b_1,
\]
which corresponds to the third component of the 3D cross product.
We remark that for nonzero vectors $a$ and $b$, we have $a \times b = 0$ if and only if $a$ is parallel to $b$.

\begin{figure}[ht]
    \centering
    \subfloat[$d = 2$]{\label{fig3a}\includegraphics[width=.48\linewidth]{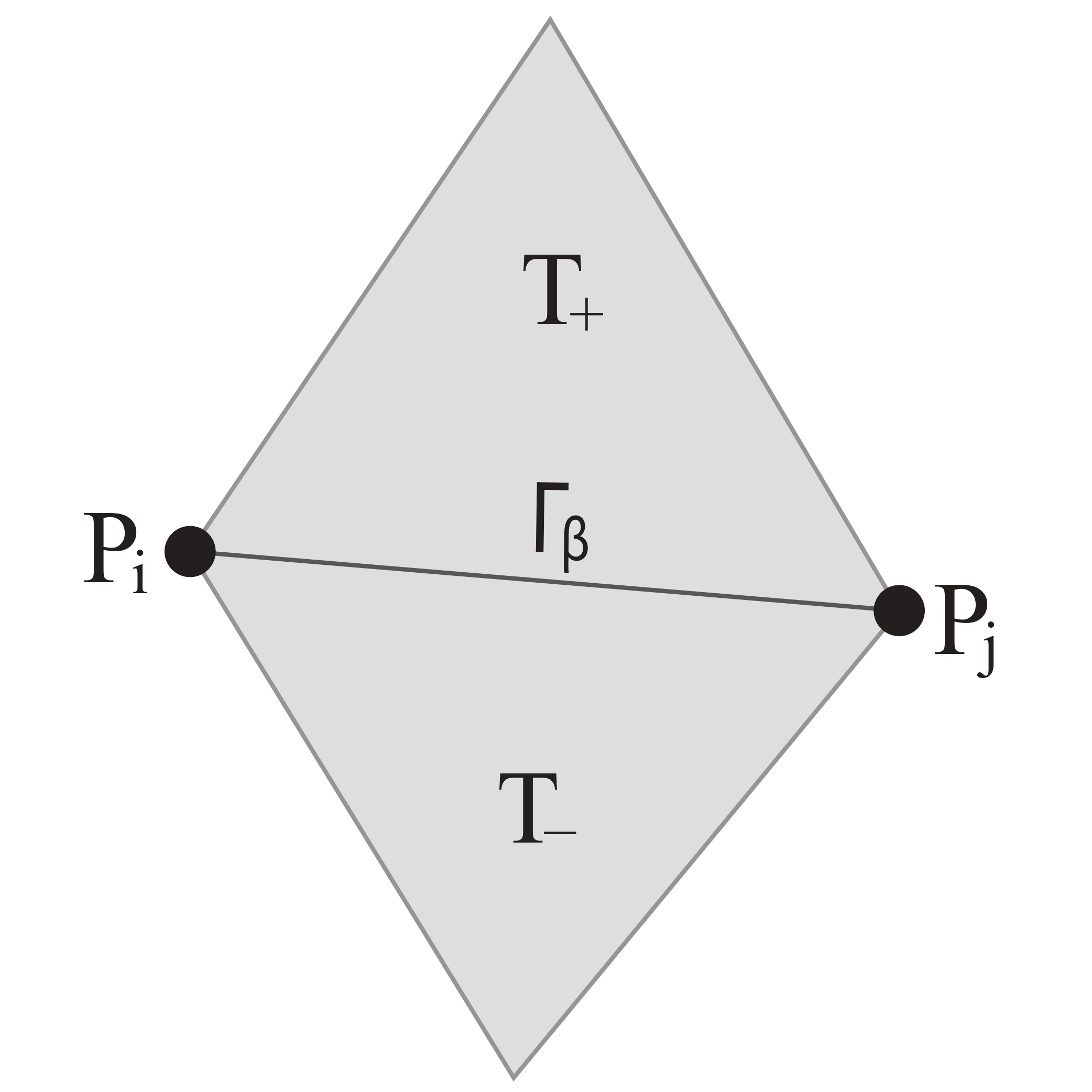}}\quad
    \subfloat[$d = 3$]{\label{fig3b}\includegraphics[width=.48\linewidth]{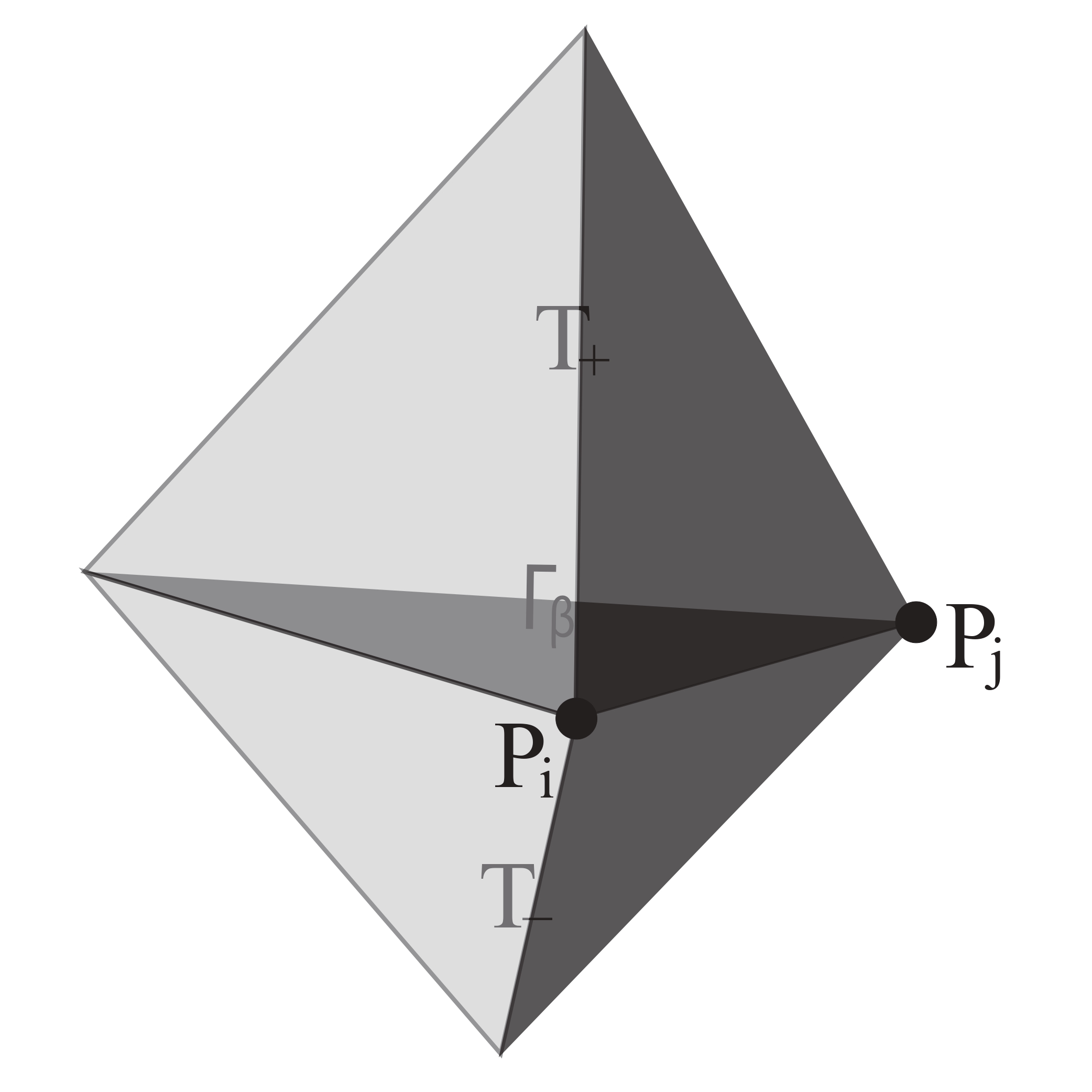}}\par
    \caption{Two adjacent elements $T_\pm$ on a triangular mesh ${\mathcal T}_h$ with $T_+\cap T_-=\Gamma_\beta$:  
    (a) $T_+\cup T_-=\overline{D_{ij}}$ in 2D, (b) $T_+\cup T_-\subset \overline{D_{ij}}$ in 3D.}
    \label{fig3}
\end{figure}

\begin{Lem}\label{lem1}
We suppose $\Gamma^\beta \in \mathcal{B}_h$, such that $T_{\pm} \in \mathcal{T}_h$ and $\Gamma^\beta = T_{+} \cap T_{-}$.
We define a nodal point $P_l$
as $\{P_l\}= \mathcal{N}_h \cap (T_{+}\backslash \Gamma^\beta)$, then it holds that
\begin{align}
&\nabla \varphi_l \vert_{T_+} \times n^\beta = 0,\label{lemeq1}\\
&[\nabla \varphi]_\beta \times n^\beta = 0\quad (\varphi \in P_1(\mathcal{T}_h)),\label{lemeq2}
\end{align}
where $[\nabla \varphi]_\beta := \nabla \varphi \vert_{T_+} - \nabla \varphi \vert_{T_-}$.
In other words,
\begin{align}
&\varphi_{l,s} n^\beta_q - \varphi_{l,q} n^\beta_s = 0, \quad (q,s = 1, \cdots, d),\label{lemeq1'}\\
&[\varphi_{,s}]_\beta n^\beta_q - [\varphi_{,q}]_\beta n^\beta_s = 0, \quad (q,s = 1, \cdots, d,~\varphi \in P_1(\mathcal{T}_h)).\label{lemeq2'}
\end{align}
\end{Lem}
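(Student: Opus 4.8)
The plan is to reduce both identities to one elementary fact about affine functions. \textbf{Key fact:} if $\psi\colon\mathbb{R}^d\to\mathbb{R}$ is affine, say $\psi(x)=a\cdot x+b$, and $\psi$ vanishes identically on an affine hyperplane $H$ with unit normal $n$, then $a=\nabla\psi$ is parallel to $n$, hence $\nabla\psi\times n=0$. Indeed, for any $x,y\in H$ we have $0=\psi(x)-\psi(y)=a\cdot(x-y)$; since $x-y$ ranges over the whole linear hyperplane $n^{\perp}$ as $x,y$ vary over $H$, this forces $a\in(n^{\perp})^{\perp}=\mathbb{R}\,n$. Recall that with the $2$D convention for $\times$ introduced above, for $v,w\in\mathbb{R}^d$ one has $v\times w=0$ exactly when $v,w$ are linearly dependent, so parallelism of $\nabla\psi$ and $n$ is the same as $\nabla\psi\times n=0$.

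For \eqref{lemeq1}, I would write $H^{\beta}$ for the affine hyperplane spanned by $\Gamma^{\beta}$; since $\Gamma^{\beta}$ is a nondegenerate $(d-1)$-simplex, its $d$ vertices are affinely independent and span $H^{\beta}$. The restriction $\varphi_l|_{T_+}$ is affine and vanishes at each of those $d$ vertices — they are precisely the nodal points of $T_+$ other than $P_l$ — hence its affine extension vanishes on all of $H^{\beta}$, and the key fact with $n=n^{\beta}$ gives $\nabla\varphi_l|_{T_+}\times n^{\beta}=0$. For \eqref{lemeq2}, take any $\varphi\in P_1(\mathcal{T}_h)\subset C^0(\overline\Omega)$: then $\varphi|_{T_+}$ and $\varphi|_{T_-}$ are affine and agree on $\Gamma^{\beta}=T_+\cap T_-$, so their affine extensions agree on $H^{\beta}$, i.e. $\psi:=\varphi|_{T_+}-\varphi|_{T_-}$ vanishes on $H^{\beta}$, and the key fact yields $[\nabla\varphi]_{\beta}\times n^{\beta}=\nabla\psi\times n^{\beta}=0$. (Alternatively \eqref{lemeq2} follows from \eqref{lemeq1}: expand $\varphi$ in the nodal basis on $T_+$ and on $T_-$; the gradients of the basis functions at the two opposite vertices are parallel to $n^{\beta}$ by \eqref{lemeq1}, and the basis functions attached to the $d$ shared vertices agree on $\Gamma^{\beta}$ from either side, so the differences of their gradients are again parallel to $n^{\beta}$.)

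Finally, \eqref{lemeq1'} and \eqref{lemeq2'} are just the coordinate form of \eqref{lemeq1} and \eqref{lemeq2}: writing $v\times w=0$ out componentwise gives $v_s w_q-v_q w_s=0$ for all $q,s\in\{1,\dots,d\}$ (the case $q=s$ being trivial, and in $d=2$ only $(q,s)=(1,2)$ being nontrivial), so taking $v=\nabla\varphi_l|_{T_+}$, resp.\ $v=[\nabla\varphi]_{\beta}$, together with $w=n^{\beta}$ produces exactly the stated identities. I do not expect any real obstacle here; the only points that need care are the affine independence of the vertices of $\Gamma^{\beta}$ (so that vanishing at them forces vanishing on $H^{\beta}$) and the $C^0$-conformity of $P_1(\mathcal{T}_h)$ across $\Gamma^{\beta}$, and the argument is genuinely dimension-free precisely because of the unified definition of $\times$.
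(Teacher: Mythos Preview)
Your proof is correct and follows essentially the same approach as the paper's: both reduce the claim to the observation that a function vanishing on the face $\Gamma^\beta$ has vanishing tangential derivatives there, so its gradient lies in $(n^\beta)^{\perp\perp}=\mathbb{R}\,n^\beta$. The paper phrases this directly as $\tau\cdot\nabla\varphi_l|_{T_+}=0$ for every tangent vector $\tau$ on $\Gamma^\beta$, while you route it through the affine extension to the full hyperplane $H^\beta$ and a ``key fact'' about affine functions vanishing on a hyperplane; these are the same argument in slightly different clothing, and your added care about affine independence of the vertices and $C^0$-conformity is sound but not a genuine departure.
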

\begin{proof}
Since $\varphi_l=0$ on $\Gamma_\beta$, 
for
any tangent vector $\tau$ on $\Gamma_\beta$,
the directional derivative
of $\varphi$ along $\tau$ vanishes, i.e., 
$\tau\cdot \nabla \varphi_l \vert_{T_+} =0$.
This implies \eqref{lemeq1} and also \eqref{lemeq1'}.

Similarly, for \eqref{lemeq2}, since $\varphi\in P_1(\mathcal{T}_h))\subset C^0(\overline \Omega)$, 
for any tangent vector $\tau$ on $\Gamma_\beta$,
the directional derivatives
of $\varphi\vert_{T_\pm}$ along $\tau$ coincide, i.e., 
$\tau\cdot \nabla \varphi \vert_{T_+} =
\tau\cdot \nabla \varphi \vert_{T_-}$ holds.
This implies \eqref{lemeq2} and also \eqref{lemeq2'}.
\end{proof}

In this section, we suppose 
\begin{align}\label{A-Pij}
\mbox{
$P_i,~P_j\in {\mathcal N}_h$,\quad
$j\in \Lambda_i$ (or equivalently, $i\in \Lambda_j$),\quad $\{P_i, P_j\} \not\subset \Gamma$.}
\end{align}
We introduce the following notation:
\begin{align*}
&D_{ij}  := \text{Supp}(\varphi_i) \cap \text{Supp}(\varphi_j),\\
&\Gamma_i := \{ x \in \partial D_{ij} ; \varphi_i(x) \neq 0 \}, \quad
\Gamma_j  := \{ x \in \partial D_{ij} ; \varphi_j(x) \neq 0 \},
\end{align*}
\begin{align*}
        \overline{D_{ij}} & = \underset{\alpha=1}{\overset{\alpha_0}{\cup}} T_\alpha, \quad \{ T_\alpha \}_{\alpha = 1}^{\alpha_0} \subset \mathcal{T}_h \quad (\alpha_0 = 2 ~\text{when}~ d=2,~ \alpha_0 \geq 3 ~\text{when}~ d=3 ).
\end{align*}
Then, $\partial D_{ij}  = \Gamma_i \cup \Gamma_j$ holds.
For a given $\alpha \in \{ 1, \cdots, \alpha_0 \} $, we denote the outward normal vector on $\partial T_\alpha$ by ${\nu^\alpha}$, and we define the different sections of $\partial T_\alpha$ as follows
\begin{align*}
        \partial_i T_\alpha  := \Gamma_i \cap \partial T_\alpha, \quad
        \partial_j T_\alpha  := \Gamma_j \cap \partial T_\alpha, \\
        \partial_1 T_\alpha := \partial T_\alpha \cap \partial D_{ij},\quad
        \partial_0 T_\alpha  := \partial T_\alpha \backslash \partial_1 T_\alpha.
\end{align*}
It is also useful to state the following relations:
\begin{align*}
    \partial T_\alpha  = \partial_0 T_\alpha \cup \partial_1 T_\alpha, \quad
    \partial_1 T_\alpha  = \partial_i T_\alpha \cup \partial_j T_\alpha.
\end{align*}
We denote the $d-1$ dimensional surface integral by $d\mathrm{S}$ in the following argument.
\begin{Lem}\label{lem3}
Under the above settings, it holds that
\begin{equation*}
    \sum_{\alpha = 1}^{\alpha_0}\int_{\partial_0 T_\alpha}\varphi_{j}({\nu}_q^{{\alpha}}\varphi_{i,s} - {\nu}_s^{{\alpha}}\varphi_{i,q})\,d\mathrm{S} = 0.
\end{equation*}
\end{Lem}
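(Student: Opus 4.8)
The plan is to collapse the sum of surface integrals over the elements into a sum over the faces interior to $D_{ij}$, and to show that the contribution of each such face vanishes by the jump identity \eqref{lemeq2'} of Lemma~\ref{lem1}.

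First I would pin down $\partial_0 T_\alpha$. By definition $\partial_0 T_\alpha=\partial T_\alpha\setminus\partial_1 T_\alpha=\partial T_\alpha\setminus\partial D_{ij}$, and since $T_\alpha\subset\overline{D_{ij}}$ this set is contained in the interior of $\overline{D_{ij}}=\bigcup_{\alpha'=1}^{\alpha_0}T_{\alpha'}$. Because $\mathcal{T}_h$ is a conforming triangulation, up to a negligible (lower-dimensional) set $\partial_0 T_\alpha$ is a union of whole $(d-1)$-faces of $T_\alpha$, each of which, being interior to $\Omega$, is shared with exactly one further mesh element; since a neighbourhood of a relative-interior point of such a face is covered by $\bigcup_{\alpha'}T_{\alpha'}$ and the elements have pairwise disjoint interiors, that neighbouring element is again one of $T_1,\dots,T_{\alpha_0}$. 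Hence $\partial_0 T_\alpha$ is precisely the union of the faces of $T_\alpha$ shared with another element of the collection $\{T_1,\dots,T_{\alpha_0}\}$.

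Let $\mathcal{I}$ denote the set of faces $\Gamma^\beta$ interior to $D_{ij}$; each $\Gamma^\beta\in\mathcal{I}$ is the common face $T_+\cap T_-$ of two elements of the collection, and I would label them so that the outward normal of $T_+$ on $\Gamma^\beta$ equals the fixed unit normal $n^\beta$ from Lemma~\ref{lem1} (so that $[\,\cdot\,]_\beta$ is the corresponding jump). In $\sum_{\alpha}\int_{\partial_0 T_\alpha}$ every $\Gamma^\beta\in\mathcal{I}$ is met exactly twice, once from $T_+$ with $\nu^\alpha=n^\beta$ and once from $T_-$ with $\nu^\alpha=-n^\beta$, and adding the two contributions gives
\begin{align*}
\sum_{\alpha = 1}^{\alpha_0}\int_{\partial_0 T_\alpha}\varphi_{j}\bigl(\nu_q^{\alpha}\varphi_{i,s} - \nu_s^{\alpha}\varphi_{i,q}\bigr)\,d\mathrm{S}
=\sum_{\Gamma^\beta\in\mathcal{I}}\int_{\Gamma^\beta}\varphi_{j}\bigl(n^\beta_q[\varphi_{i,s}]_\beta - n^\beta_s[\varphi_{i,q}]_\beta\bigr)\,d\mathrm{S}.
\end{align*}
Applying \eqref{lemeq2'} of Lemma~\ref{lem1} with $\varphi=\varphi_i$, the bracket $n^\beta_q[\varphi_{i,s}]_\beta - n^\beta_s[\varphi_{i,q}]_\beta$ vanishes identically on each $\Gamma^\beta\in\mathcal{I}$, so every term on the right is zero and the lemma follows.

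The only non-routine point is the first step: checking that the element-wise integral really telescopes, i.e., that $\partial_0 T_\alpha$ consists exactly of the faces shared with the other $T_{\alpha'}$'s, so that the double sum over pairs (element, interior face) covers $\mathcal{I}$ with multiplicity exactly two. This is where the geometry of $D_{ij}$ as the union of the elements surrounding the edge $\overline{P_iP_j}$, together with the standing hypothesis \eqref{A-Pij}, enters; once this bookkeeping is in place, the conclusion is immediate from the jump relation already established in Lemma~\ref{lem1}.
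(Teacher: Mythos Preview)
Your proposal is correct and follows essentially the same route as the paper: both rewrite the element-wise boundary sum as a sum over the interior faces of $D_{ij}$, pair the two contributions across each such face into a jump, and then invoke \eqref{lemeq2'} of Lemma~\ref{lem1} to kill each term. The paper is terser---it splits into the cases $d=2$ (one shared edge) and $d=3$ (a ring of $\alpha_0$ shared faces, cf.\ Figure~\ref{fig4}) and simply asserts the combinatorial structure---whereas you justify more carefully why $\partial_0 T_\alpha$ consists exactly of the faces shared with another $T_{\alpha'}$ in the collection.
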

\begin{proof}
We note that $\alpha_0 = 2$ when $d=2$, $\alpha_0 \geq 3$ when $d=3$. 1) If $d = 2$, let $\beta_0 = 1$ and $\Gamma_1 = \partial_0 T_\alpha $, for $\alpha = 1,2$. 2) If $d = 3$, there exists $\{\Gamma_\beta\}_{\beta = 1}^{\alpha_0} \subset \mathcal{B}_h $, such that $\underset{\alpha=1}{\overset{\alpha_0}{\cup}} \partial_0 T_\alpha = \underset{\beta=1}{\overset{\alpha_0}{\cup}} \Gamma_\beta $, and for $\beta \in \{1, \cdots, \alpha_0 \}$, there exist unique, distinct $\alpha, \gamma \in \{ 1, \cdots, \alpha_0\}$,  such that $ \Gamma_\beta = \partial_0 T_\alpha \cap \partial_0 T_\gamma$ (Figure~\ref{fig4}). We set $\beta_0 = \alpha_0$.  

Then, noting that $\nu^\alpha = \pm n^\beta$ on $\Gamma_\beta$, we have 
\begin{align*}
    \sum_{\alpha = 1}^{\alpha_0}\int_{\partial_0 T_\alpha}\varphi_{j}(\nu_q^\alpha\varphi_{i,s} - \nu_s^\alpha\varphi_{i,q})d\mathrm{S} & = \sum_{\beta = 1}^{\beta_0}\int_{\Gamma_\beta} \pm \varphi_{j}(n^\beta_q [\varphi_{i,s}]_\beta - n^{\beta}_s [\varphi_{i,q}]_\beta)d\mathrm{S}\quad \\ 
    & = 0. \quad \text{(by Lemma \ref{lem1})}
\end{align*}
\end{proof}
    
\begin{Th}\label{th1}
Under the assumption \eqref{A-Pij}, $K_{ij} = K_{ji} = K_{ij}^T$ holds.
\end{Th}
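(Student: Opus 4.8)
The plan is to reduce the whole statement to the symmetry of one $d\times d$ matrix. Since $K_{ij}=K_{ji}^{T}$ was established right after \eqref{K-FEM}, it suffices to prove $K_{ij}^{kl}=K_{ij}^{lk}$ for all $k,l$ (then $K_{ij}^{T}=K_{ij}$, whence also $K_{ji}=K_{ij}$, which gives the asserted chain $K_{ij}=K_{ji}=K_{ij}^{T}$). To get workable entries I would substitute $u=\varphi_j e_l$ and $v=\varphi_i e_k$ into $a(\cdot,\cdot)$ and use the minor symmetry $c_{pqrs}=c_{qprs}=c_{pqsr}$ to absorb the symmetrizations in $e[u]$ and $e[v]$, obtaining
\begin{equation*}
K_{ij}^{kl}=-\int_{\Omega}c_{kqls}\,\varphi_{i,q}\,\varphi_{j,s}\,dx=-\,c_{kqls}\,I_{qs},
\qquad I_{qs}:=\int_{D_{ij}}\varphi_{i,q}\,\varphi_{j,s}\,dx,
\end{equation*}
where $\varphi_{i,q}\varphi_{j,s}$ is supported in $D_{ij}$ and the constant $c_{kqls}$ is pulled out of the integral by the homogeneity assumption. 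Subtracting the same identity with $k$ and $l$ interchanged, and rewriting $c_{lqks}=c_{kslq}$ by the major symmetry, gives
\begin{equation*}
K_{ij}^{kl}-K_{ij}^{lk}=-\,(c_{kqls}-c_{kslq})\,I_{qs},
\end{equation*}
whose coefficient is antisymmetric under the exchange $q\leftrightarrow s$. Hence the theorem reduces to showing that $I=(I_{qs})$ is symmetric: once $I_{qs}=I_{sq}$ is known, contracting an antisymmetric tensor against a symmetric one forces $K_{ij}^{kl}-K_{ij}^{lk}=0$.

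It remains to prove $I_{qs}=I_{sq}$, and this is precisely where the dimension-counting argument of \cite{NK14} is replaced by a divergence-theorem computation valid in both $d=2$ and $d=3$. Working cell by cell on $\overline{D_{ij}}=\cup_{\alpha=1}^{\alpha_0}T_\alpha$ and using that $\varphi_i$ is affine on each $T_\alpha$ (so $\varphi_{i,qs}=0$ there), integration by parts gives $\int_{T_\alpha}\varphi_{i,q}\varphi_{j,s}\,dx=\int_{\partial T_\alpha}\varphi_j\,\varphi_{i,q}\,\nu_s^\alpha\,d\mathrm{S}$, hence
\begin{equation*}
I_{qs}-I_{sq}=-\sum_{\alpha=1}^{\alpha_0}\int_{\partial T_\alpha}\varphi_j\,(\nu_q^\alpha\varphi_{i,s}-\nu_s^\alpha\varphi_{i,q})\,d\mathrm{S}.
\end{equation*}
I would then split $\partial T_\alpha=\partial_0 T_\alpha\cup\partial_i T_\alpha\cup\partial_j T_\alpha$: on $\partial_i T_\alpha\subset\Gamma_i$ (the face of $T_\alpha$ opposite $P_j$) one has $\varphi_j\equiv 0$; on $\partial_j T_\alpha\subset\Gamma_j$ (the face of $T_\alpha$ opposite $P_i$) the affine function $\varphi_i$ vanishes at every vertex of that face, so $\varphi_i\equiv 0$ on $\partial_j T_\alpha$, its tangential gradient vanishes, and therefore $\nabla\varphi_i|_{T_\alpha}$ is parallel to $\nu^\alpha$ on $\partial_j T_\alpha$, i.e. $\nu_q^\alpha\varphi_{i,s}-\nu_s^\alpha\varphi_{i,q}=0$ there (the same mechanism as in Lemma~\ref{lem1}); and the sum over the interior faces $\partial_0 T_\alpha$ vanishes by Lemma~\ref{lem3}. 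Thus $I_{qs}=I_{sq}$, and the conclusion $K_{ij}=K_{ji}=K_{ij}^{T}$ follows.

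I expect the bookkeeping around $I$ to be the only delicate part. One must correctly classify the faces of a cell $T_\alpha\subset D_{ij}$---the faces that contain the edge $P_iP_j$ make up $\partial_0 T_\alpha$, the face opposite $P_j$ is $\partial_i T_\alpha$, and the face opposite $P_i$ is $\partial_j T_\alpha$---and then check that it is $\varphi_i$, and not $\varphi_j$, that vanishes identically on $\partial_j T_\alpha$. This classification, combined with Lemma~\ref{lem3} (which packages the cross-cell cancellation over $\partial_0 T_\alpha$ using the tangential continuity identity \eqref{lemeq2'} for $\nabla\varphi$ and the continuity of $\varphi_j$), is what allows the argument to proceed in 3D without any a priori control of the number $\alpha_0$ of cells around an edge. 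The algebraic reduction via the minor and major symmetries together with the homogeneity of $C$ is, by contrast, entirely routine.
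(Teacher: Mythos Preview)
Your proof is correct and the geometric core---the divergence-theorem computation showing $I_{qs}=I_{sq}$ via the decomposition $\partial T_\alpha=\partial_0 T_\alpha\cup\partial_i T_\alpha\cup\partial_j T_\alpha$ together with Lemmas~\ref{lem1} and~\ref{lem3}---is exactly what the paper does. The only difference is in the algebraic wrapper: the paper computes $K_{ij}^{kl}-K_{ji}^{kl}$ directly from the unsymmetrized representation \eqref{kijns} (so that the vanishing of $\int(\varphi_{j,q}\varphi_{i,s}-\varphi_{j,s}\varphi_{i,q})\,dx$ yields $K_{ij}=K_{ji}$ \emph{without invoking} the major or minor symmetry of $C$, and then $K_{ij}=K_{ij}^T$ follows from $K_{ji}=K_{ij}^T$), whereas you first use the minor and major symmetries of $C$ to reduce $K_{ij}^{kl}-K_{ij}^{lk}$ to a contraction of a $(q,s)$-antisymmetric coefficient against $I_{qs}$. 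Both routes land on the same identity $I_{qs}=I_{sq}$; the paper's packaging is slightly more general (it isolates $K_{ij}=K_{ji}$ as a statement independent of the symmetries of $C$), while yours is a bit more streamlined once full symmetry is assumed.
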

\begin{proof}
In \cite{NK14}, without assuming any symmetries of the elasticity modulus $C=(c_{pqrs}(x))$, it is shown that $K_{ij}^{kl}$ for $k,l = 1, \dots,d$ can be written component-wise as follows:
\begin{align}
    K_{ij}^{kl} &= -\frac{1}{4} \sum_{q,s=1}^{d} \int_{\Omega} ( c_{kqls} \varphi_{j,s} \varphi_{i,q} + c_{qkls} \varphi_{j,s} \varphi_{i,q} + c_{kqsl} \varphi_{j,s} \varphi_{i,q} + c_{qksl} \varphi_{j,s} \varphi_{i,q} ) dx\notag\\
    &= \sum_{q,s=1}^{d}
    \int_\Omega C_{qs}^{kl} \varphi_{j,s}\varphi_{i,q}\,dx,\label{kijns}
\end{align}
where we set 
\begin{equation*}
    C_{qs}^{kl} := \frac{1}{4} ( c_{kqls} + c_{qkls} + c_{kqsl} + c_{qksl} ).
\end{equation*}

Without invoking the symmetries of the elasticity tensor, we can show the assertion $K_{ij} = K_{ji}$ by using the divergence theorem and Lemmas~\ref{lem1} and \ref{lem3}.
For $k,l = 1, \dots,d$, we compute the $(k,l)$-component of the matrix $K_{ij}-K_{ji}$ as follows:
\begin{align*}
    K_{ij}^{kl} - K_{ji}^{kl} & = \sum_{q,s=1}^{d}\sum_{\alpha = 1}^{\alpha_0} C_{qs}^{kl}\int_{T_\alpha}(\varphi_{j,q}\varphi_{i,s} - \varphi_{j,s}\varphi_{i,q})dx\\ 
    & = \sum_{q,s=1}^{d} C_{qs}^{kl}\sum_{\alpha = 1}^{\alpha_0}\int_{\partial T_\alpha}\varphi_j(\nu_q^\alpha\varphi_{i,s} -\nu_s^\alpha\varphi_{i,q})d\mathrm{S}\\
    & = \sum_{q,s=1}^{d} C_{qs}^{kl} \biggl( \sum_{\alpha = 1}^{\alpha_0}\int_{\partial_0 T_\alpha}\varphi_j(\nu_q^\alpha\varphi_{i,s} -\nu_s^\alpha\varphi_{i,q})d\mathrm{S}\\
    &+ \sum_{\alpha = 1}^{\alpha_0}\int_{\partial_1 T_\alpha}\varphi_{j}(\nu_q^\alpha\varphi_{i,s} -\nu_s^\alpha\varphi_{i,q})d\mathrm{S}\biggr) \begin{array}{ll}
          \text{ ($\partial T_\alpha = \partial_0 T_\alpha \cup \partial_1 T_\alpha$}, \\
          \text{ $ \mathcal{H}^{d-1}(\partial_0 T_\alpha \cap \partial_1 T_\alpha) = 0$)}
    \end{array}\\
    & = \sum_{q,s=1}^{d} C_{qs}^{kl} \sum_{\alpha = 1}^{\alpha_0}\int_{\partial_1 T_\alpha}\varphi_j(\nu_q^\alpha\varphi_{i,s} -\nu_s^\alpha\varphi_{i,q})d\mathrm{S} \text{ ~~(By Lemma \ref{lem3})} \\
    & = \sum_{q,s=1}^{d} C_{qs}^{kl} \sum_{\alpha = 1}^{\alpha_0} \biggl(\int_{\partial_i T_\alpha}\varphi_{j}(\nu_q^\alpha\varphi_{i,s} -\nu_s^\alpha\varphi_{i,q})d\mathrm{S}\\
    &+ \int_{\partial_j T_\alpha}\varphi_{j}(\nu_q^\alpha\varphi_{i,s} -\nu_s^\alpha\varphi_{i,q})d\mathrm{S} \biggr) \begin{array}{ll}
          \text{ ($\partial_1 T_\alpha = \partial_i T_\alpha \cup \partial_j T_\alpha$}, \\
          \text{ $ \mathcal{H}^{d-1}(\partial_i T_\alpha \cap \partial_j T_\alpha) = 0$)}
    \end{array}\\
    & = \sum_{q,s=1}^{d} C_{qs}^{kl} \sum_{\alpha = 1}^{\alpha_0} \int_{\partial_j T_\alpha}\varphi_{j}(\nu_q^\alpha\varphi_{i,s} -\nu_s^\alpha\varphi_{i,q})d\mathrm{S} \text{ ($\varphi_j = 0$ on $\partial_i T_\alpha$)} \\
    & = 0. \text{ ~~(By Lemma \ref{lem1})}
\end{align*}
Since $K_{ij}^T = K_{ji}$, then $K_{ij}^T = K_{ij}$ holds as well.
\end{proof}
\begin{figure}[h]
    \centering
    \includegraphics[width=0.5\textwidth]{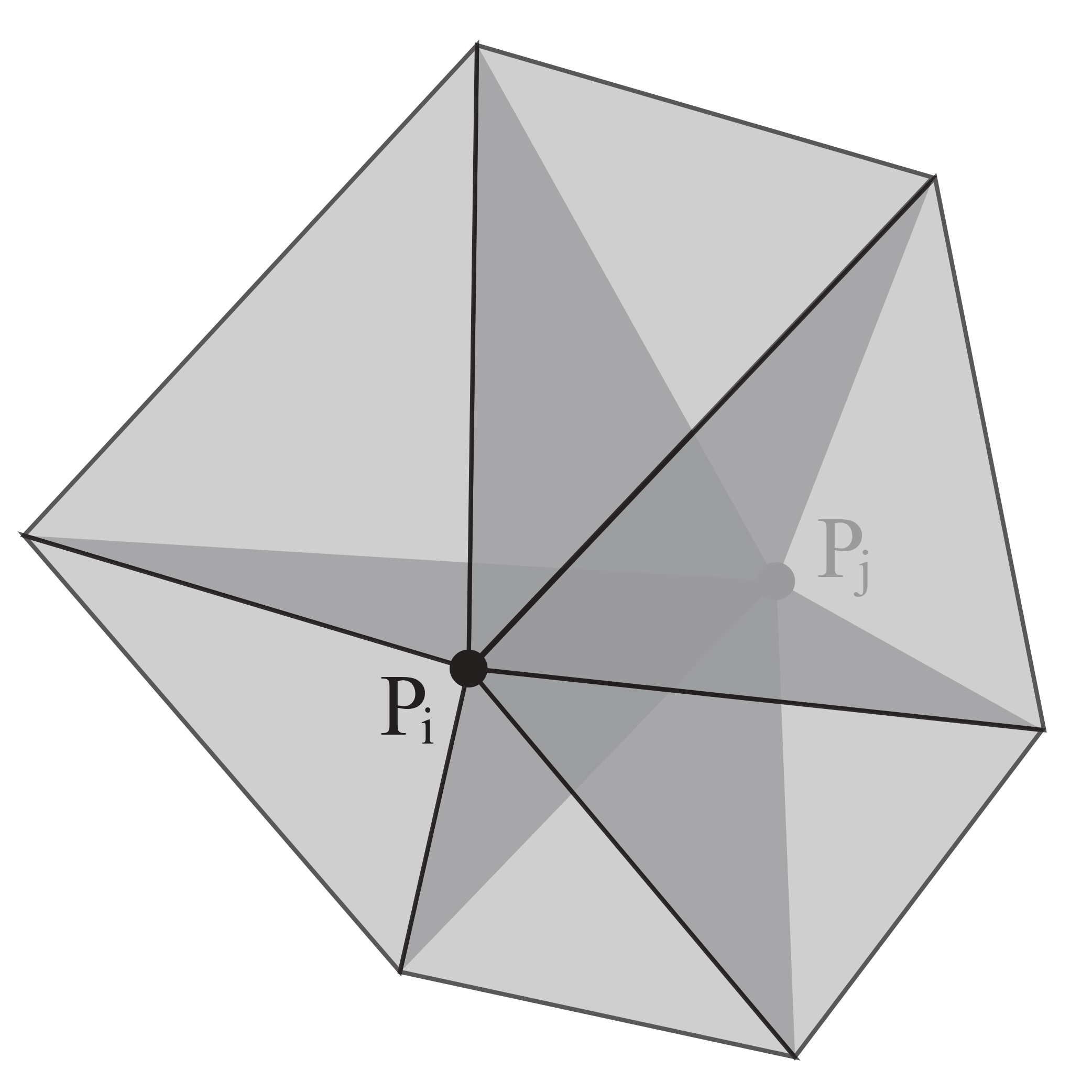}
    \caption{All the tetrahedral elements sharing two nodal points in a 3-dimensional tetrahedral mesh}
    \label{fig4}
\end{figure}

\section{Positive-definiteness of the spring constant}
\label{sec5}
\setcounter{equation}{0}
Under the isotropy assumption, we present a necessary and sufficient condition for the positive-definiteness of the spring constant, along with a sufficient condition in terms of Poisson's ratio and mesh regularity.
In the following, let $\lambda$ and $\mu$ be the standard 3D Lam\'{e} parameters. We note that they coincide with the 2D parameters in the plane strain formulation. We assume that $\mu >0$, $ \lambda + \frac{2}{d}\mu >0$, and that the elasticity tensor satisfies the isotropy condition
\begin{equation}\label{elasiso}
    c_{pqrs} = \lambda\delta_{pq}\delta_{rs} + \mu (\delta_{pr}\delta_{qs} + \delta_{ps}\delta_{qr}).
\end{equation}
Substituting \eqref{elasiso} into \eqref{kijns}, for $k,l = 1, \dots,d$; we get 
\begin{align*}
    K_{ij}^{kl} &= -\sum_{q,s=1}^{d} \int_{\Omega}  \big(\lambda\delta_{kq}\delta_{ls} + \mu (\delta_{kl}\delta_{qs} + \delta_{ks}\delta_{ql})\big) \varphi_{j,s} \varphi_{i,q}  dx\\
    &= - \int_{\Omega} \sum_{q,s=1}^{d}  (\lambda\delta_{kq}\delta_{ls} + \mu \delta_{ks}\delta_{ql}) \varphi_{j,s} \varphi_{i,q}  dx - \mu \int_{\Omega} \sum_{q,s=1}^{d} \delta_{kl}\delta_{qs} \varphi_{j,s} \varphi_{i,q}  dx\\
    &= - \lambda \int_{\Omega} \varphi_{j,l} \varphi_{i,k}   dx - \mu \int_{\Omega} \varphi_{j,k} \varphi_{i,l}  dx - \mu \delta_{kl} \int_{\Omega} \sum_{q=1}^{d} \varphi_{j,q} \varphi_{i,q}  dx.
\end{align*}
Under the assumption \eqref{A-Pij}, we note that by Theorem \ref{th1}, we have 
\begin{align*}
    \int_{\Omega} \varphi_{j,l} \varphi_{i,k}   dx = \int_{\Omega} \varphi_{j,k} \varphi_{i,l}  dx \quad (k,l = 1, \dots, d),
\end{align*}
then $K_{ij}$ can be written as 
\begin{equation*}
    K_{ij} = (\lambda + \mu) A_{ij} + \mu\gamma_{ij} I,
\end{equation*}
where $I$ is the $(d \times d)$ identity matrix, $A_{ij} = (A_{ij}^{kl})_{k,l=1,\cdots,d} \in \mathbb{R}^{d \times d}_{\rm sym}$, $\gamma_{ij}\in \mathbb{R}$, and they are given by
\begin{align*}
    A_{ij}^{kl} = -\int_{\Omega}\varphi_{j,k}\varphi_{i,l} \,dx,\quad
    \gamma_{ij} = -\int_{\Omega} \nabla\varphi_j \cdot \nabla\varphi_i \,dx.
\end{align*}

\begin{Th} \label{th2}    
Under the assumption \eqref{A-Pij}, $K_{ij}$ is positive-definite if and only if
\begin{equation*}
    \max_{\lvert\xi \lvert = 1}\left(\int_{\Omega} (\nabla\varphi_j \cdot \xi) (\nabla\varphi_i \cdot \xi) dx\right) < \frac{-\mu}{\lambda + \mu} \int_{\Omega} \nabla\varphi_j \cdot \nabla\varphi_i dx
\end{equation*}
holds.
\end{Th}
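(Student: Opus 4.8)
The plan is to diagonalize. Recall from the computation preceding the statement that $K_{ij}=(\lambda+\mu)A_{ij}+\mu\gamma_{ij}I$, where, by Theorem~\ref{th1}, $A_{ij}\in\mathbb{R}^{d\times d}_{\rm sym}$, and $\gamma_{ij}=-\int_\Omega\nabla\varphi_j\cdot\nabla\varphi_i\,dx$. I would first record that $\lambda+\mu>0$: for $d=2$ this is exactly the hypothesis $\lambda+\frac{2}{d}\mu>0$, and for $d=3$ it follows from $\lambda+\mu=\bigl(\lambda+\tfrac{2}{3}\mu\bigr)+\tfrac{1}{3}\mu>0$ together with $\mu>0$. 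Since $A_{ij}$ is symmetric, it has real eigenvalues $\nu_1\le\cdots\le\nu_d$, and the spectrum of the symmetric matrix $K_{ij}$ is $\{(\lambda+\mu)\nu_m+\mu\gamma_{ij}\}_{m=1}^d$. Because $\lambda+\mu>0$, the smallest eigenvalue of $K_{ij}$ is $(\lambda+\mu)\nu_1+\mu\gamma_{ij}$, so $K_{ij}>O$ holds if and only if $(\lambda+\mu)\nu_1+\mu\gamma_{ij}>0$, i.e.\ $\nu_1>\dfrac{-\mu\gamma_{ij}}{\lambda+\mu}$.

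Next I would rewrite $\nu_1$ through the Rayleigh quotient. By symmetry of $A_{ij}$, $\nu_1=\min_{|\xi|=1}\xi^{T}A_{ij}\xi$, and from $A_{ij}^{kl}=-\int_\Omega\varphi_{j,k}\varphi_{i,l}\,dx$ one gets $\xi^{T}A_{ij}\xi=-\int_\Omega(\nabla\varphi_j\cdot\xi)(\nabla\varphi_i\cdot\xi)\,dx$ for every $\xi\in\mathbb{R}^d$; hence $\nu_1=-\max_{|\xi|=1}\int_\Omega(\nabla\varphi_j\cdot\xi)(\nabla\varphi_i\cdot\xi)\,dx$. Substituting this expression for $\nu_1$ and the expression $\gamma_{ij}=-\int_\Omega\nabla\varphi_j\cdot\nabla\varphi_i\,dx$ into the inequality $\nu_1>\dfrac{-\mu\gamma_{ij}}{\lambda+\mu}$, and noting that multiplying by $\lambda+\mu>0$ preserves the inequality, yields precisely the claimed criterion. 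Both implications come out simultaneously since every step is an equivalence.

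The argument is short and I do not expect a genuine obstacle; the one point that needs care is the sign bookkeeping, namely the two sign reversals involved in passing from the spectrum of $A_{ij}$ to that of $K_{ij}$ and from $\min_{|\xi|=1}\xi^{T}A_{ij}\xi$ to the $\max$ appearing in the statement. I would also stress explicitly that the whole reduction relies on $A_{ij}$ being symmetric — both for the existence of a real spectrum of $K_{ij}$ and for the Rayleigh characterization of $\nu_1$ — which is exactly the content of Theorem~\ref{th1}, and that the strict positivity of $\lambda+\mu$ (so that the right-hand side of the criterion is well defined and the inequality direction is under control) must be checked separately in the two dimensions.
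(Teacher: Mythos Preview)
Your proposal is correct and follows essentially the same route as the paper: write $K_{ij}=(\lambda+\mu)A_{ij}+\mu\gamma_{ij}I$, use $\lambda+\mu>0$ to see that positivity of $K_{ij}$ is equivalent to the smallest eigenvalue of $A_{ij}$ exceeding $-\mu\gamma_{ij}/(\lambda+\mu)$, and express that smallest eigenvalue via the Rayleigh quotient $\xi^TA_{ij}\xi=-\int_\Omega(\nabla\varphi_j\cdot\xi)(\nabla\varphi_i\cdot\xi)\,dx$. The only cosmetic difference is that the paper orders eigenvalues decreasingly while you order them increasingly, and the paper does not spell out the case split $d=2,3$ for $\lambda+\mu>0$.
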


\begin{proof}
We first remark that $\lambda +\mu >0$ holds under the assumption on the Lam\'e parameters.
Let $\eta_1 \geq \cdots \geq \eta_d$ be the eigenvalues of $A_{ij}$, then the eigenvalues of $K_{ij}$ denoted $\zeta_1 \geq \cdots \geq \zeta_d$ will be given by
\begin{equation*}
    \zeta_l = (\lambda + \mu) \eta_l + \gamma_{ij}\mu, \qquad (l = 1 \cdots d).
\end{equation*}
It is easy to see that $K_{ij}$ is positive-definite if and only if $-\eta_d < \frac{\gamma_{ij} \mu}{\lambda + \mu}$, where $\eta_d$ is the smallest eigenvalue of $A_{ij}$ given by
\begin{equation*}
    \eta_d = \min_{\lvert \xi \rvert= 1, \xi \in \mathbb{R}^d }\big((A_{ij}\xi) \cdot \xi \big).
\end{equation*}
We have
\begin{align*}
        (A_{ij}\xi)\cdot \xi = -\int_{\Omega} (\nabla\varphi_j \cdot \xi) (\nabla\varphi_i \cdot \xi)\, dx,
\end{align*}
which yields
\begin{equation*}
    -\eta_d = \max_{\lvert \xi \lvert = 1}\left(\int_{\Omega} (\nabla\varphi_j \cdot \xi) (\nabla\varphi_i \cdot \xi) \,dx \right).
\end{equation*}
\end{proof}

Let $\nu:= \frac{\lambda}{2(\lambda+\mu)}$ be the standard 3D Poisson's ratio of the material occupying the domain $\Omega$. This is equivalent to Poisson's ratio used in the 2D plane strain formulation, and we note that $-1<\nu<0.5$. Let $\theta^\alpha$ be the angle (or dihedral angle) between $\partial_j T_\alpha$ and $\partial_i T_\alpha$.
For $l \in \{ i ,j \}$, we denote $\nabla \varphi_l^\alpha := \nabla \varphi_l \vert_{T_\alpha} $. We note that $\nabla \varphi_l^\alpha = - \lvert \nabla \varphi_l^\alpha  \lvert  n_l^\alpha$, where $n_l^\alpha$ is the unit outward normal vector on $\partial_l T_\alpha$. 
\begin{Th}\label{th3}
Under the assumption \eqref{A-Pij}, $K_{ij}$ is positive-definite, if for all $\alpha \in \{1\dots \alpha_0\}$,
\begin{equation*}
    \cos{\theta^\alpha} > \frac{1}{3-4\nu}
\end{equation*}
holds.    
\end{Th}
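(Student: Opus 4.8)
The plan is to derive the claim from the necessary-and-sufficient criterion of Theorem~\ref{th2}, reducing that criterion to one elementary scalar inequality per element $T_\alpha$ ($\alpha\in\{1,\dots,\alpha_0\}$) of $\overline{D_{ij}}$. First I would rewrite both integrals in Theorem~\ref{th2} as sums over $T_1,\dots,T_{\alpha_0}$, using that $\nabla\varphi_i,\nabla\varphi_j$ are constant on each $T_\alpha$ and vanish off $\overline{D_{ij}}$. Setting $n_l^\alpha:=-\nabla\varphi_l^\alpha/|\nabla\varphi_l^\alpha|$ and $w_\alpha:=|T_\alpha|\,|\nabla\varphi_i^\alpha|\,|\nabla\varphi_j^\alpha|>0$ (each gradient is nonzero, since the restriction of $\varphi_l$ to $T_\alpha$ is a non-constant affine function), the geometric input is that $n_i^\alpha$ and $n_j^\alpha$ are the inward unit normals of the two faces of $T_\alpha$ that meet at the angle $\theta^\alpha$, so that $n_i^\alpha\cdot n_j^\alpha=-\cos\theta^\alpha$; hence $\int_\Omega\nabla\varphi_j\cdot\nabla\varphi_i\,dx=-\sum_{\alpha}w_\alpha\cos\theta^\alpha$. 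Since by hypothesis every $\cos\theta^\alpha>\frac{1}{3-4\nu}>0$ (note $3-4\nu>1$ as $\nu<\frac12$) and $\mu>0,\ \lambda+\mu>0$, the right-hand side of the criterion in Theorem~\ref{th2} equals $\frac{\mu}{\lambda+\mu}\sum_{\alpha}w_\alpha\cos\theta^\alpha>0$.

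For the left-hand side, for any unit $\xi$ I would write $\int_\Omega(\nabla\varphi_j\cdot\xi)(\nabla\varphi_i\cdot\xi)\,dx=\sum_{\alpha}w_\alpha(n_i^\alpha\cdot\xi)(n_j^\alpha\cdot\xi)$ and invoke the elementary fact that for unit vectors $a,b$ and any unit $\xi$ one has $(a\cdot\xi)(b\cdot\xi)=\xi^{T}\bigl(\frac12(ab^{T}+ba^{T})\bigr)\xi\le\frac12(1+a\cdot b)$, because the largest eigenvalue of the symmetric matrix $\frac12(ab^{T}+ba^{T})$ equals $\frac12(1+a\cdot b)$. Applied termwise with $a\cdot b=-\cos\theta^\alpha$ (the same $\xi$ serving in every term), this yields $\max_{|\xi|=1}\int_\Omega(\nabla\varphi_j\cdot\xi)(\nabla\varphi_i\cdot\xi)\,dx\le\sum_{\alpha}w_\alpha\,\frac{1-\cos\theta^\alpha}{2}$.

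It then suffices to check $\sum_{\alpha}w_\alpha\,\frac{1-\cos\theta^\alpha}{2}<\frac{\mu}{\lambda+\mu}\sum_{\alpha}w_\alpha\cos\theta^\alpha$, and since all $w_\alpha>0$ this follows from the per-element inequality $\frac{1-\cos\theta^\alpha}{2}<\frac{\mu}{\lambda+\mu}\cos\theta^\alpha$. Multiplying by $\lambda+\mu>0$ and rearranging, this reads $\cos\theta^\alpha>\frac{\lambda+\mu}{\lambda+3\mu}$ (the division by $\lambda+3\mu=(\lambda+\mu)+2\mu>0$ being legitimate); using $\frac{\mu}{\lambda+\mu}=1-2\nu$ one verifies $\frac{\lambda+\mu}{\lambda+3\mu}=\frac{1}{3-4\nu}$, which is precisely the assumed bound. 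I expect the only genuine obstacle to be the first step: pinning down the geometric relation between the piecewise-constant gradients $\nabla\varphi_l^\alpha$ and the interelement angles $\theta^\alpha$ (that $\nabla\varphi_i^\alpha$ is antiparallel to the inward normal of the face of $T_\alpha$ opposite $P_i$, and that two faces meeting at interior angle $\theta^\alpha$ have normals with inner product $-\cos\theta^\alpha$); after that, the surface/volume decomposition, the eigenvalue bound, and the conversion into Poisson's ratio are all routine.
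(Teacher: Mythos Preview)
Your proposal is correct and follows essentially the same route as the paper: decompose both sides of the criterion in Theorem~\ref{th2} over the elements $T_\alpha$, bound the maximum termwise via $\max_{|\xi|=1}(a\cdot\xi)(b\cdot\xi)=\tfrac12(1+a\cdot b)$ for unit vectors (which the paper records as Lemma~\ref{lem4} with a trigonometric proof, whereas you obtain it from the spectrum of $\tfrac12(ab^T+ba^T)$), and then reduce to the per-element scalar inequality and rewrite it via $\tfrac{\mu}{\lambda+\mu}=1-2\nu$. The only cosmetic slip is calling $n_l^\alpha=-\nabla\varphi_l^\alpha/|\nabla\varphi_l^\alpha|$ an \emph{inward} normal (it is the outward normal of the face opposite $P_l$, as in the paper), but this does not affect the argument since the dot product $n_i^\alpha\cdot n_j^\alpha=-\cos\theta^\alpha$ is insensitive to a simultaneous sign flip.
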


\begin{proof}
According to Theorem~\ref{th2}, and the following inequality
\begin{align*}
        \max_{\lvert  \xi \lvert  = 1} \big(\int_{\Omega} (\nabla\varphi_j \cdot \xi) (\nabla\varphi_i \cdot \xi) dx \big) &= \max_{\lvert  \xi \lvert  = 1}\big(\sum_{\alpha=1}^{\alpha_0} \int_{T_\alpha} (\nabla\varphi_j \cdot \xi) (\nabla\varphi_i \cdot \xi) dx \big)\\ 
        &\leq \sum_{\alpha=1}^{\alpha_0} \max_{\lvert  \xi \lvert  = 1}\big( \int_{T_\alpha} (\nabla\varphi_j \cdot \xi) (\nabla\varphi_i \cdot \xi) dx \big)\\
        &= \sum_{\alpha=1}^{\alpha_0} \lvert T_\alpha \rvert \,\lvert  \nabla \varphi_j^\alpha  \lvert  \,\lvert  \nabla \varphi_i^\alpha \lvert  \,\max_{\lvert  \xi \lvert  = 1}\big( (n_j^\alpha \cdot \xi)(n_i^\alpha \cdot \xi) \big), 
\end{align*}
the positive-definiteness of $K_{ij}$ is guaranteed if
\begin{align*}\label{eqpg}
    \sum_{\alpha=1}^{\alpha_0} \lvert T_\alpha \rvert \, \lvert  \nabla \varphi_j^\alpha  \lvert \,  \lvert  \nabla \varphi_i^\alpha \lvert  \max_{\lvert  \xi \lvert  = 1}\big( (n_j^\alpha \cdot \xi)(n_i^\alpha \cdot \xi) \big) &< \frac{-\mu}{\lambda + \mu}\int_{\Omega} \nabla\varphi_j \cdot \nabla\varphi_i dx \\
    &= \sum_{\alpha = 1}^{\alpha_0} \lvert T_\alpha \rvert \,\lvert  \nabla \varphi_j^\alpha  \lvert \, \lvert  \nabla \varphi_i^\alpha \lvert  \frac{-(n_j^\alpha \cdot n_i^\alpha) \mu}{\lambda + \mu}
\end{align*}
holds. Consequently, with Lemma \ref{lem4} it is clear that if  
\begin{equation*}
    \frac{(1+ n_j^\alpha \cdot n_i^\alpha)}{2} < \frac{-(n_j^\alpha \cdot n_i^\alpha) \mu}{\lambda + \mu}
\end{equation*}
holds for all $\alpha \in \{1\dots \alpha_0\}$, then $K_{ij}$ is positive-definite. Finally, we make use of the following relations
\begin{align*}
    n_j^\alpha \cdot n_i^\alpha = -\cos{\theta^\alpha},\quad
    \frac{\mu}{\lambda + \mu} = 1 - 2\nu. 
\end{align*}
\end{proof}

\begin{Rem}
\label{remequi}
When $d = 2$, and $T_\alpha$ is an equilateral triangle ($\theta^\alpha = \pi / 3$), for $\alpha = 1,2$. Then by Lemma \ref{lem4} we have 
\begin{align*}
        \max_{\lvert  \xi \lvert  = 1} \big(\int_{\Omega} (\nabla\varphi_j \cdot \xi) (\nabla\varphi_i \cdot \xi) dx \big) &=  \sum_{\alpha=1}^{\alpha_0} \max_{\lvert  \xi \lvert  = 1}\big( \int_{T_\alpha} (\nabla\varphi_j \cdot \xi) (\nabla\varphi_i \cdot \xi) dx \big),
\end{align*}
and it follows that in the plane strain formulation, $K_{ij}$ is positive-definite if and only if $\nu<1/4 $. This coincides with the condition given in \cite{NK14}. 
For plane stress, we have $\nu_{2D} = \frac{\nu}{1-\nu}$ \cite{mhs20}, and it follows that $K_{ij}$ is positive-definite if and only if $\nu_{2D}<1/3 $.

This case is especially relevant because if we look for a triangulation of the 2D space such that the maximum of all triangular angles is minimized, then the minimum will be $\pi / 3$ achieved by a regular triangular tessellation.  
    
\end{Rem}

\begin{Rem}
\label{rem3d}
When $d = 3$, any triangulation must contain some dihedral angles greater than or equal to $72^\circ$ \cite{EPS04}. Thus, if we wish for a spring-block system with only positive-definite springs, then our condition only works for materials with $\nu <-0.0591$. To the best of our knowledge, the closest known triangulation to this upper limit is given in \cite{EPS04}, with a maximum dihedral angle of $74.20^\circ$, and thus $\nu <-0.1682$. When $\nu = 0$, we get $\theta^\alpha < \arccos(1/3)$, which is the dihedral angle of a regular tetrahedron. 
\end{Rem}

\section{Numerical examples}\label{sec6} 
\setcounter{equation}{0}
While we have shown that the symmetry of the spring constant is always true by Theorem~\ref{th1}, the positivity is not. Theorem \ref{th2} is a sufficient and necessary condition for the positivity of a spring constant, but it is difficult to work with when building a mesh; for this reason, we introduced in Theorem \ref{th3} a sufficient condition in terms of Poisson's ratio and the local geometry of the mesh. Constructing a mesh that satisfies this condition everywhere is difficult, except in two dimensions and for small values of Poisson's ratio, as in Remark \ref{remequi}. However, since this condition is only sufficient, it is possible for a spring constant not to satisfy it and be positive-definite. To investigate the relevance of this possibility, we perform a numerical evaluation of the spring constants derived from several sample meshes for varying values of Poisson's ratio. 

\begin{figure}[ht]
    \centering
    \subfloat[]{\label{fig5a}\includegraphics[width=.22\linewidth]{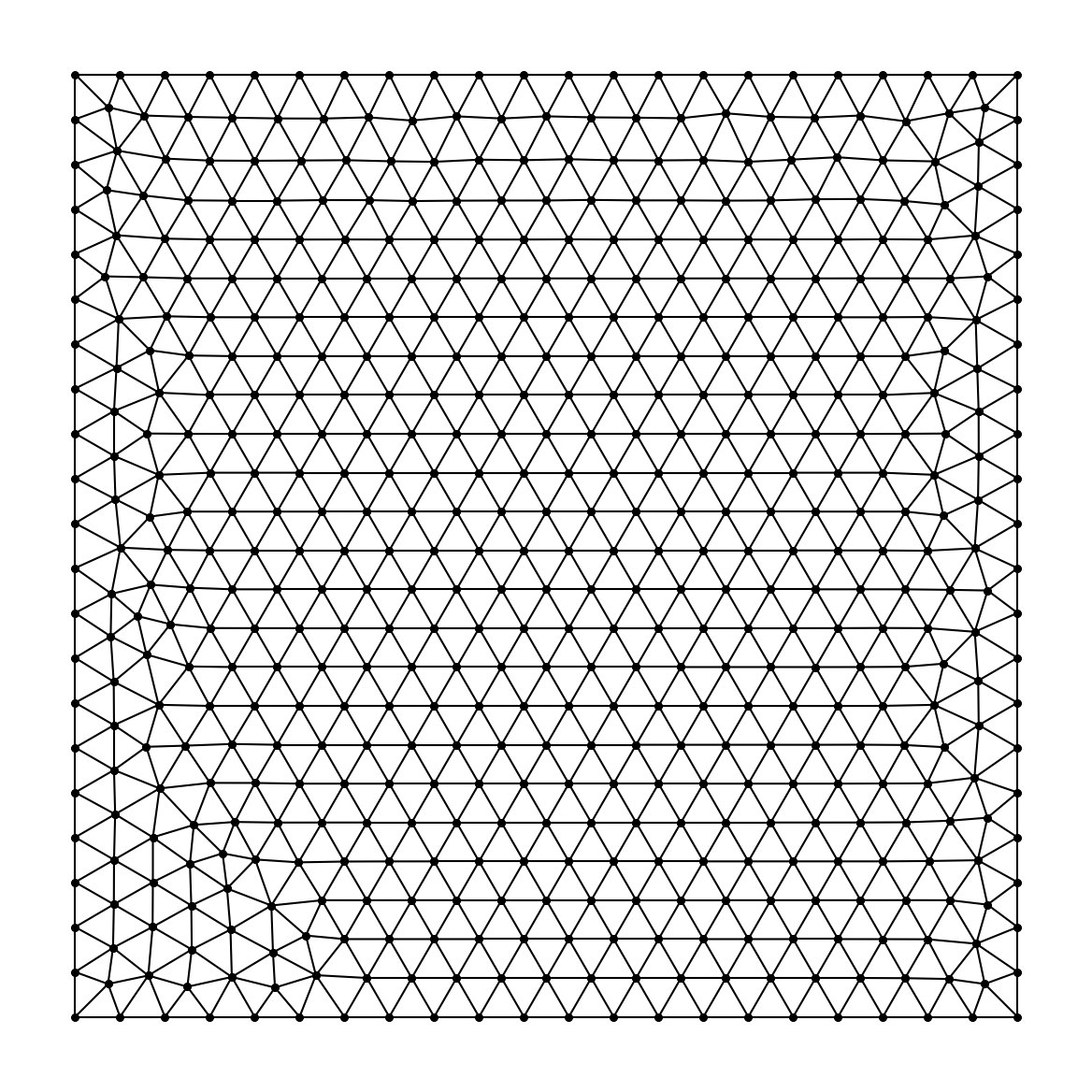}}\quad
    \subfloat[]{\label{fig5b}\includegraphics[width=.22\linewidth]{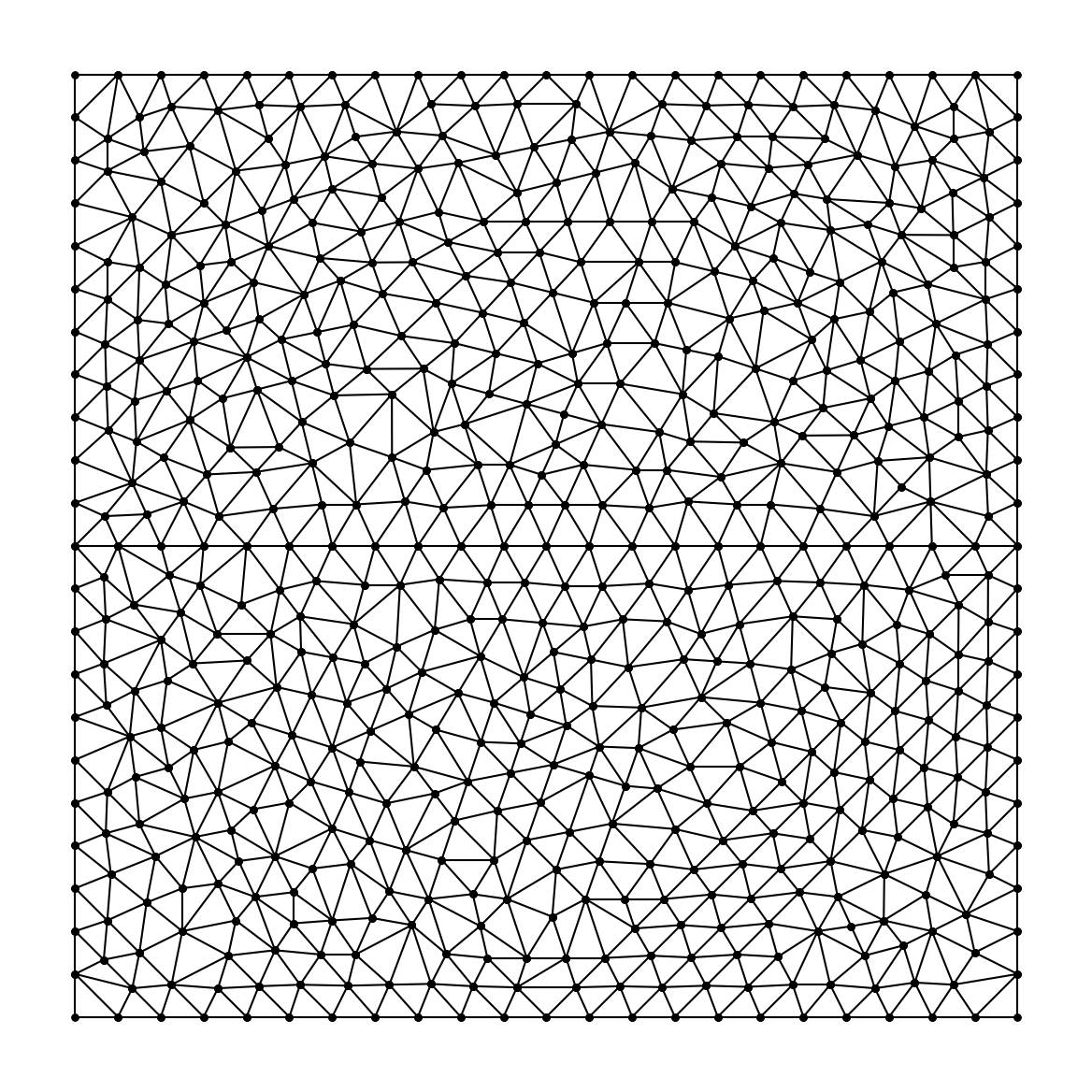}}\quad
    \subfloat[]{\label{fig5c}\includegraphics[width=.22\linewidth]{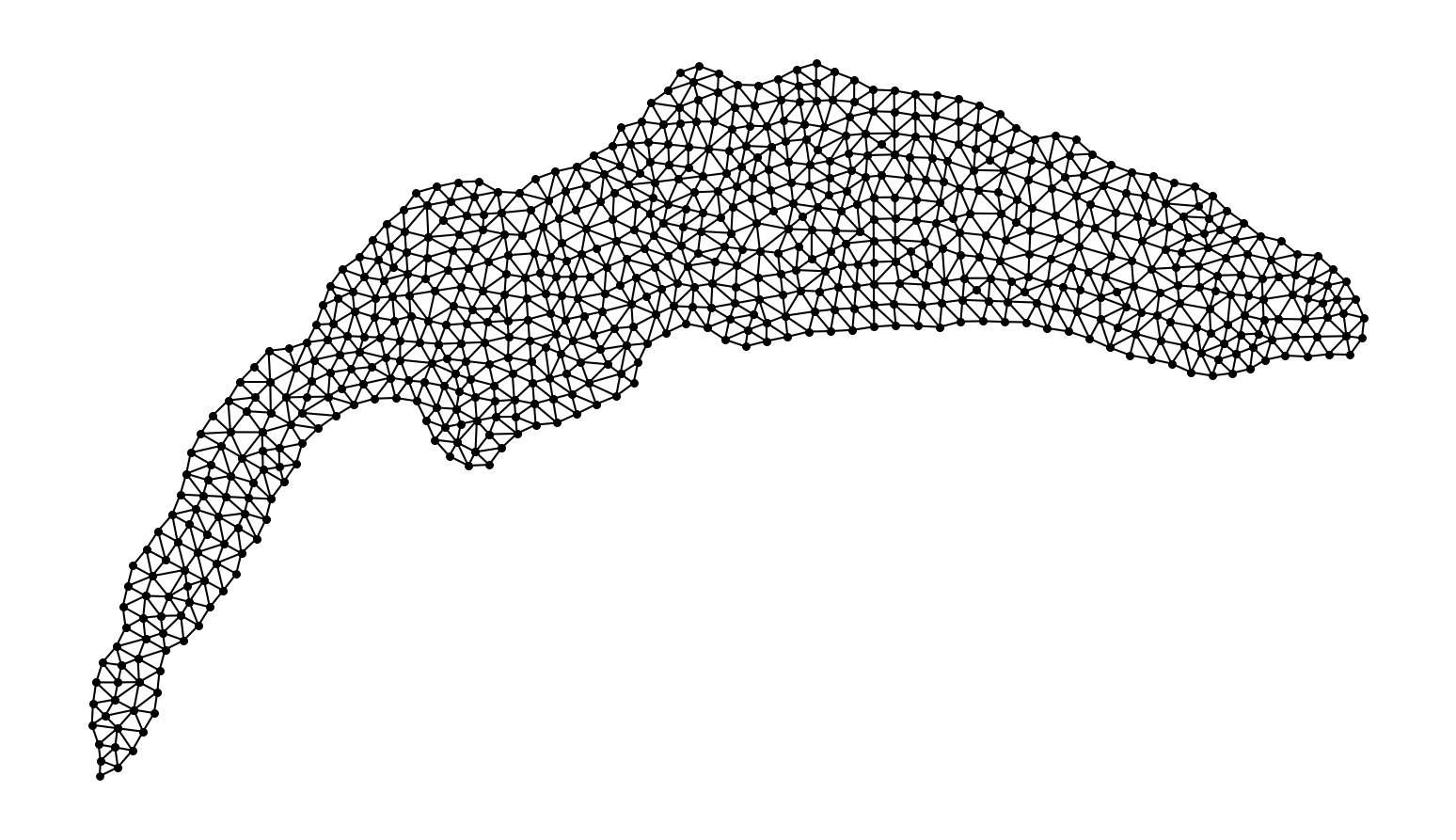}}\quad
    \subfloat[]{\label{fig5d}\includegraphics[width=.22\linewidth]{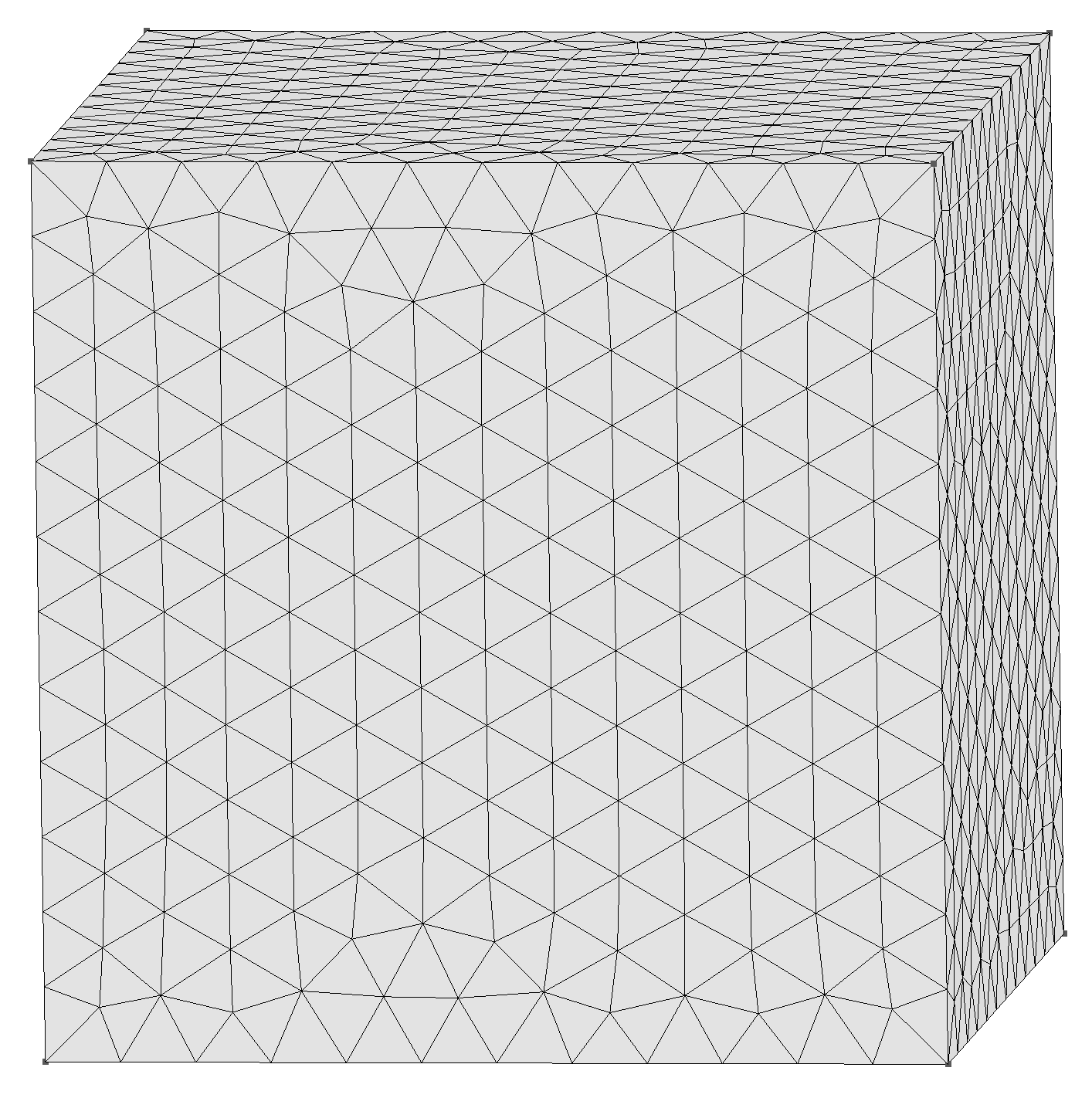}}\par
    \caption{Sample FEM meshes}
    \label{fig5}
\end{figure}

We consider three 2-dimensional meshes with triangular elements; one is constructed on an irregular shape domain, and the other two are on square domains, with one of them being more equilateral (most of its elements are equilateral triangles). We also consider a three-dimensional mesh with tetrahedral elements on a cubical domain (Figure~\ref{fig5}).

\begin{figure}[h]
    \centering
    \includegraphics[width=0.55\textwidth]{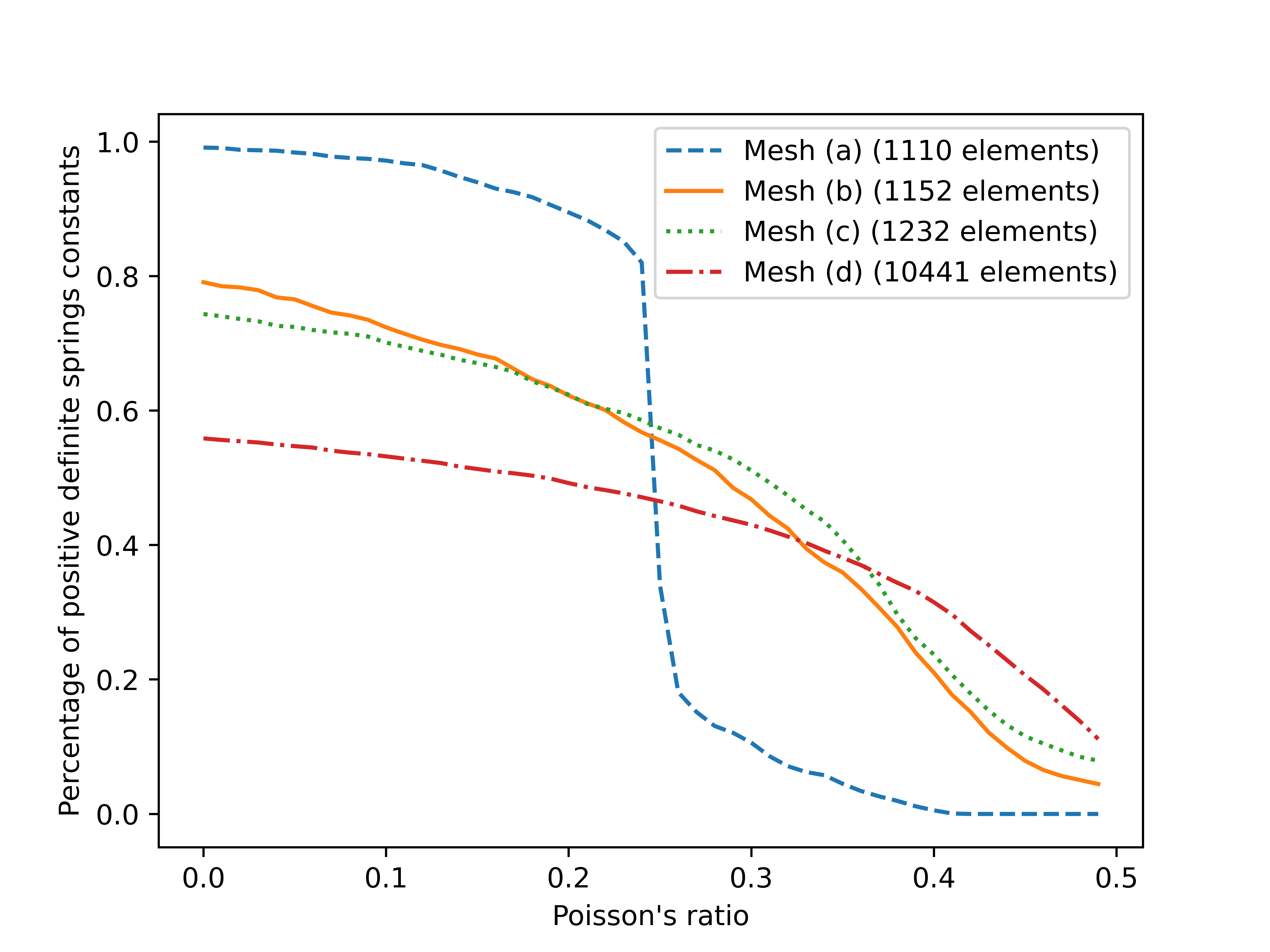}
    \caption{The percentage of the positive-definite spring constants derived from some FEM meshes in terms of Poisson's ratio}
    \label{fig6}
\end{figure}

To investigate the global effect of the Poisson's ratio, assuming that the material occupying the domain is isotropic, we derive a spring-block system from the P1-FEM on every sample mesh as discussed above, and we plot the percentage of the spring constants that are positive-definite as a function of the Poisson's ratio (Figure~\ref{fig6}). The number of springs with positive-definite constants decreases as Poisson's ratio increases, regardless of the mesh. In the more equilateral two-dimensional mesh, almost all of the spring constants are positive-definite for small values of the Poisson's ratio; we also observe a sharp drop around the value $\nu = 0.25$ in accordance with Remark \ref{remequi}. In three dimensions, even for small values of Poisson's ratio, a large portion of the derived spring constants are not positive-definite (Remark \ref{rem3d}). 

Additionally, we plot a color map on a square mesh to showcase the effect of the local geometry of the mesh on the smallest eigenvalue of the derived spring constant. The virtual springs considered in the P1-FEM derived spring-block system are each associated with a segment line in the mesh. Here, the color around a segment indicates the smallest eigenvalue of the associated spring constant (Figure~\ref{fig7}). 
\begin{figure}[ht]
    \centering
    \subfloat[]{\label{fig1a}\includegraphics[width=.48\linewidth]{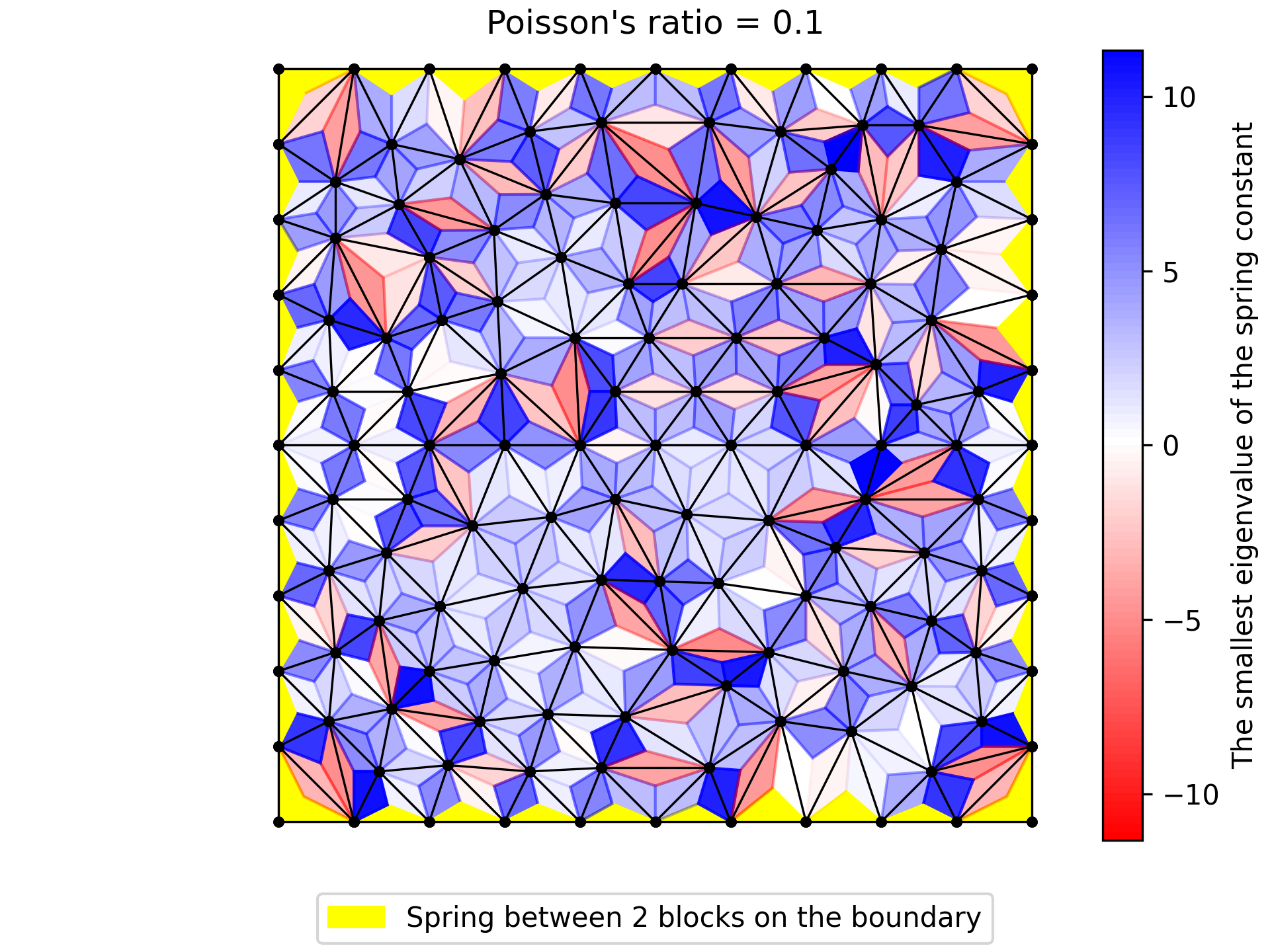}}\quad
    \subfloat[]{\label{fig1a}\includegraphics[width=.48\linewidth]{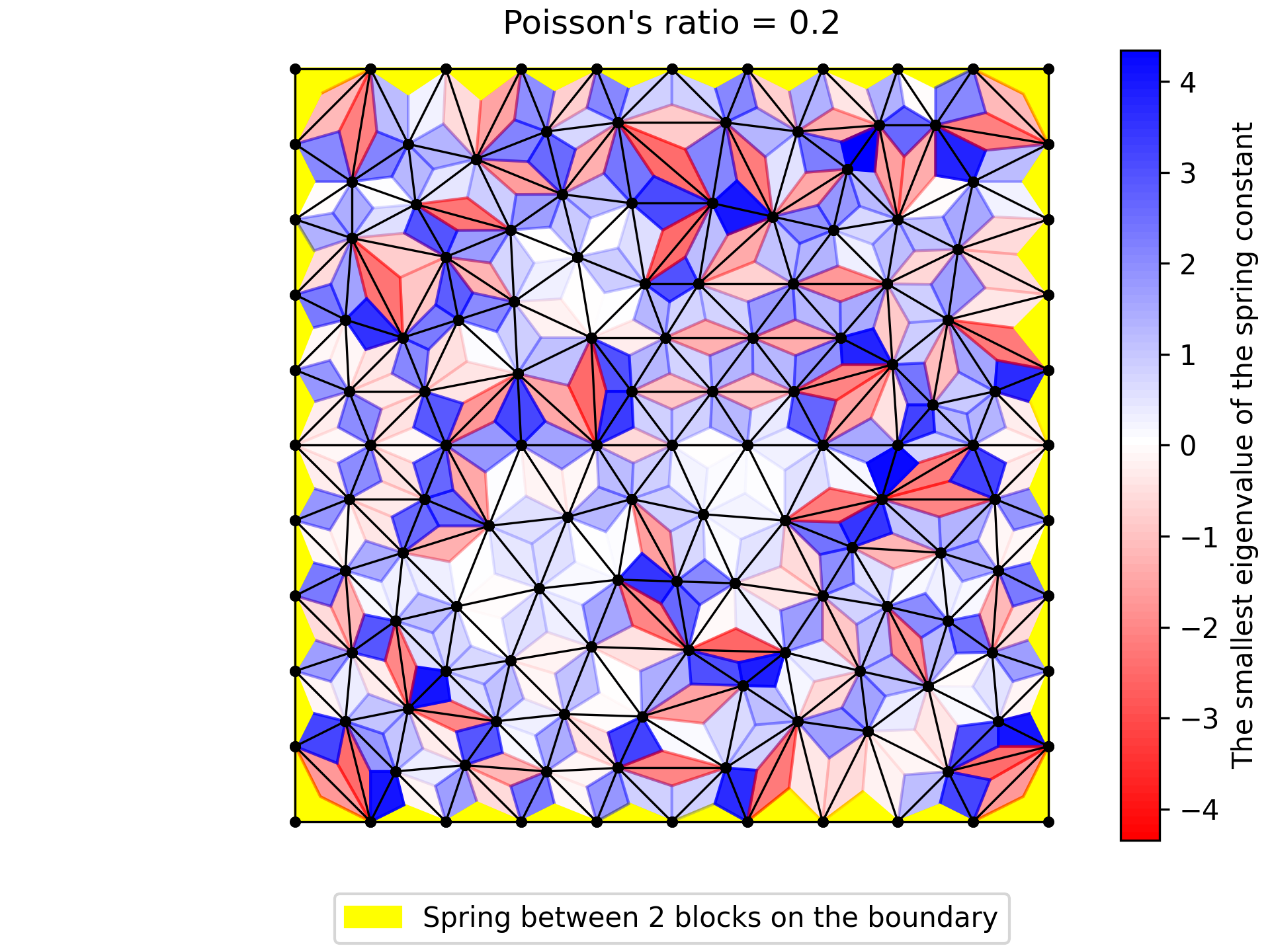}}\quad
    \subfloat[]{\label{fig1a}\includegraphics[width=.48\linewidth]{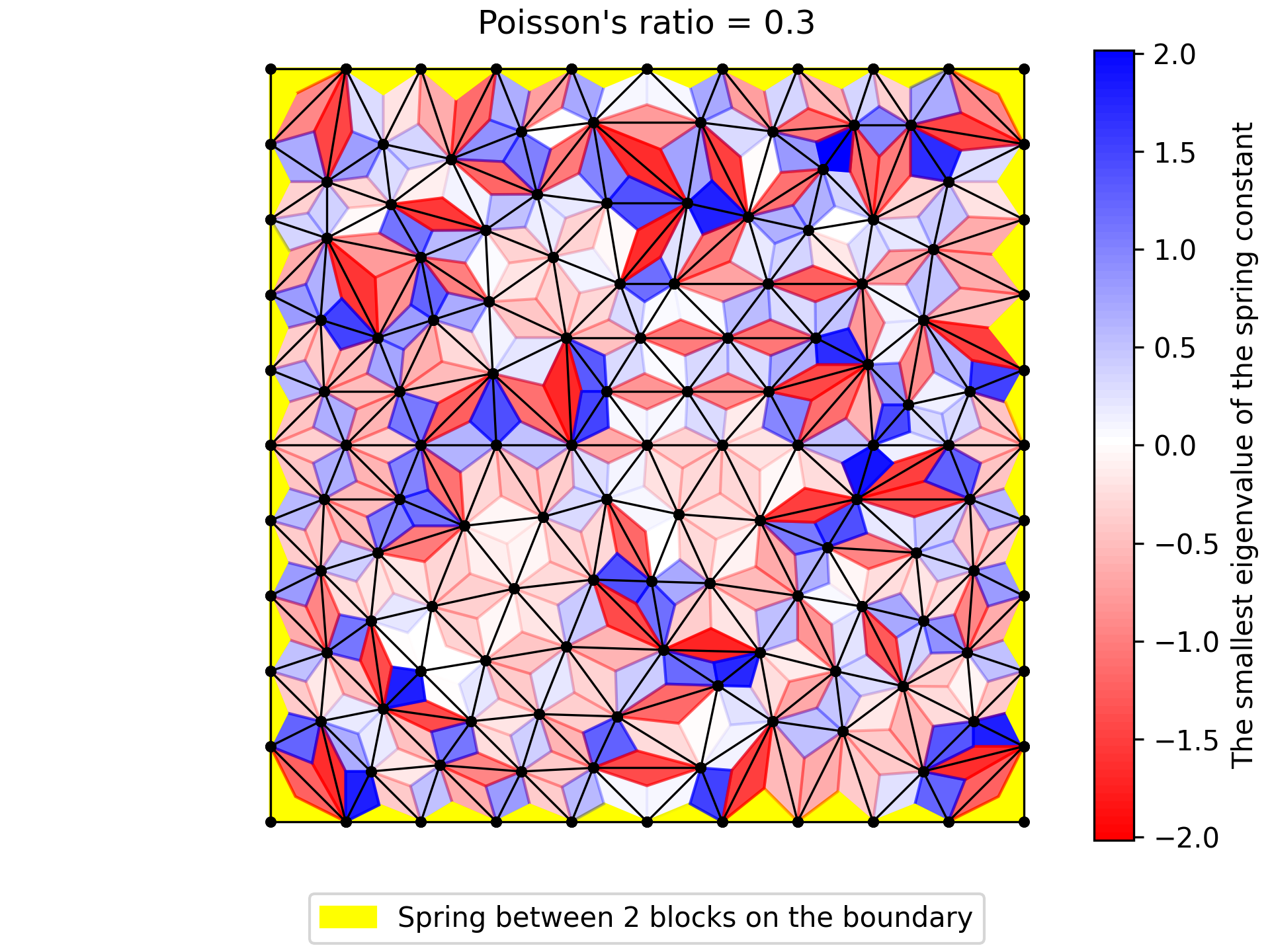}}\quad
    \subfloat[]{\label{fig1b}\includegraphics[width=.48\linewidth]{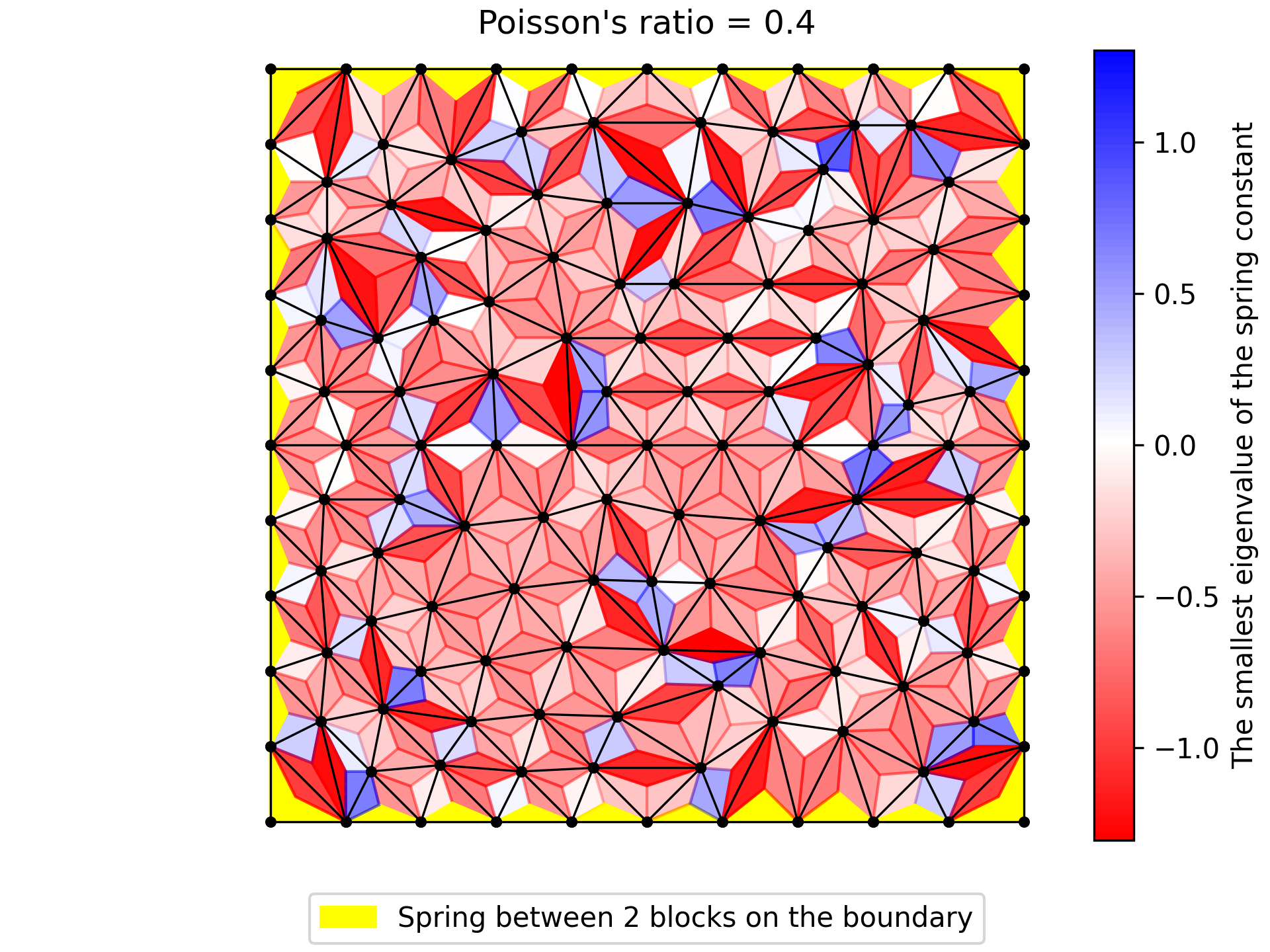}}\par
    \caption{The smallest eigenvalue of the spring constant associated with a segment line}
    \label{fig7}
\end{figure}
As expected, the segments encompassed by smaller angles, as discussed in the previous section, produce positive-definite spring constants for a larger interval of Poisson's ratio. 

Indeed, in both cases, the numerical observations agree with our proposed condition for positivity by analytical discussions. While we can control the positive-definiteness of a given spring constant by altering the mesh locally, there is a limit on the global behavior due to the problem of space tiling. These results suggest that relaxing the proposed condition on the mesh regularity in any meaningful way may not be possible.

\section{Conclusion}\label{sec7} 
\setcounter{equation}{0}
In this study, we have further investigated the spring constants derived from P1-FEM. To ensure the solvability of this model when employed in fracture analysis, the spring constants are sought to be symmetric and positive-definite as studied in \cite{KN13}. We have shown that the spring constants are symmetric for any homogeneous anisotropic elasticity tensor in a unified approach in both 2D and 3D. For the positivity of the spring constants, while it is not always true, we could guarantee it for isotropic elasticity by imposing a sufficient condition in terms of the FEM mesh regularity and Poisson's ratio. This condition is easy to impose locally. However, the same can not be said if we wish for it to hold everywhere in the mesh. On that front, we can only guarantee it for small values of the Poisson's ratio in 2 dimensions. This remark is further supported by the numerical results we presented. 

Although our final findings are pessimistic about deriving the perfect spring-block model consistent with linear elasticity and allowing mathematical solvability of the fracture models in 2D and 3D, the mathematical analysis we carried out will be crucial in future works.

\funding{This work was partially supported by JSPS KAKENHI Grant Numbers JP24H00184, JP25K00920, JP20H01823, JP21H04431, JP24H00188 and JP25K00920; JST CREST Grant Number JPMJCR2014; and MEXT Scholarship (No. 230707).}

\dataava{Data will be made available on reasonable request.}

\complia{On behalf of all authors, the corresponding author states that there is no conflict of interest.}

\ethical{Not applicable.}

\bibliographystyle{camc}
\bibliography{camc}

\begin{appendices}

\section{}\label{secA1}
\begin{Lem}\label{lem4}
Let $d \in \mathbb{N}$, $d \geq 2$. For $a$, $b$ $\in \mathbb{R}^d$ with $\lvert a \rvert= \lvert b \rvert= 1$, it holds that
\begin{equation*}
    \max_{\lvert x \rvert= 1, x \in \mathbb{R}^d }\big((a \cdot x) (b \cdot x) \big) =  \frac{1+ a \cdot b}{2}.
\end{equation*}
The maximum is achieved by 
\begin{align*}
    \begin{cases}
    {\displaystyle x = \pm \frac{a+b}{\lvert a+b \rvert}} & \mbox{if } a+b \neq 0,\\[5pt]
    x\in \langle a \rangle^\perp & \mbox{if } a+b = 0.
    \end{cases}
\end{align*}
\end{Lem}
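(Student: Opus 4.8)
The plan is to read $(a\cdot x)(b\cdot x)$ as a quadratic form and invoke the variational characterization of the largest eigenvalue. First I would write $(a\cdot x)(b\cdot x) = x^\top M x$ with the symmetric matrix $M := \tfrac12\bigl(ab^\top + ba^\top\bigr)$, so that $\max_{|x|=1}(a\cdot x)(b\cdot x) = \lambda_{\max}(M)$, attained exactly at the unit eigenvectors of $M$ for $\lambda_{\max}(M)$. Since every column of $M$ lies in $\langle a,b\rangle$, the matrix $M$ vanishes on $\langle a,b\rangle^\perp$, so all eigenvalues of $M$ other than at most two equal $0$, and the task reduces to computing the spectrum of $M$ on $\langle a,b\rangle$.

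Next I would exhibit the remaining eigenvectors explicitly. Using $|a|=|b|=1$, a direct computation gives $M(a+b) = \tfrac{1+a\cdot b}{2}(a+b)$ and $M(a-b) = \tfrac{a\cdot b-1}{2}(a-b)$, and $(a+b)\perp(a-b)$ because $|a|=|b|$. When $a$ and $b$ are linearly independent, $a+b$ and $a-b$ are nonzero and span $\langle a,b\rangle$, so the eigenvalues of $M$ are precisely $\tfrac{1+a\cdot b}{2}$, $\tfrac{a\cdot b-1}{2}$, and $0$. By Cauchy--Schwarz $-1\le a\cdot b\le 1$, hence $\tfrac{1+a\cdot b}{2}\ge 0\ge\tfrac{a\cdot b-1}{2}$, so $\lambda_{\max}(M) = \tfrac{1+a\cdot b}{2}$, attained at $x = \pm(a+b)/|a+b|$; this settles the statement in the generic case.

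Then I would dispatch the two degenerate configurations. If $a=b$, then $(a\cdot x)(b\cdot x) = (a\cdot x)^2\le|a|^2|x|^2 = 1 = \tfrac{1+a\cdot b}{2}$, with equality at $x=\pm a=\pm(a+b)/|a+b|$. If $a+b=0$, then $(a\cdot x)(b\cdot x) = -(a\cdot x)^2\le 0 = \tfrac{1+a\cdot b}{2}$, with equality precisely when $a\cdot x=0$, i.e. $x\in\langle a\rangle^\perp$; this is the case in which the maximizing set is a full subspace rather than a pair of points, matching the statement.

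The computation is short, and the only genuine care needed is the bookkeeping around degeneracy: checking that $\tfrac{1+a\cdot b}{2}$ really is the largest of $\{\tfrac{1+a\cdot b}{2},\tfrac{a\cdot b-1}{2},0\}$ (which is exactly where $|a\cdot b|\le 1$ enters) and that, when $a=\pm b$ so $a\pm b$ fails to be a basis of $\langle a,b\rangle$, the stated maximizers remain correct. If one prefers to avoid spectral theory, the same decomposition $x = \alpha\tfrac{a+b}{|a+b|} + \beta\tfrac{a-b}{|a-b|} + w$ with $w\perp\langle a,b\rangle$ and $\alpha^2+\beta^2+|w|^2 = 1$ yields, upon inserting $|a\pm b|^2 = 2(1\pm a\cdot b)$, the identity $(a\cdot x)(b\cdot x) = \tfrac12\bigl(\alpha^2(1+a\cdot b) - \beta^2(1-a\cdot b)\bigr)$, from which both the maximal value and the set of maximizers are immediate.
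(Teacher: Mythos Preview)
Your proof is correct and complete; the eigenvector computation $M(a\pm b)=\tfrac{1\pm a\cdot b}{2}(a\pm b)$ is the key step, and the degenerate cases $a=\pm b$ are handled properly.

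The route, however, is genuinely different from the paper's. The paper does not introduce the matrix $M$ at all: instead it first restricts to the plane $\langle a,b\rangle$, parametrizes unit vectors there by an angle ($a\cdot b=\cos\alpha$, $a\cdot x=\cos\theta$, hence $b\cdot x=\cos(\theta\pm\alpha)$), and applies the product-to-sum identity $\cos\theta\cos(\theta\pm\alpha)=\tfrac12(\cos(2\theta\pm\alpha)+\cos\alpha)$ to read off the planar maximum $\tfrac{1+\cos\alpha}{2}$ at $\theta=\alpha/2$ or $\pi-\alpha/2$. It then shows separately, via the decomposition $\bar x=cy+z$ with $y\in\langle a,b\rangle$, $z\in\langle a,b\rangle^\perp$, that any unit vector outside the plane gives a strictly smaller value. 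Your spectral approach is more structural and dimension-agnostic from the outset: once $(a\cdot x)(b\cdot x)=x^\top Mx$ is written, Rayleigh's principle delivers both the value and the maximizing set in one stroke, and the reduction to $\langle a,b\rangle$ is automatic since $M$ has rank at most two. The paper's trigonometric argument is more elementary in that it uses no linear algebra beyond orthogonal decomposition, but it requires the two-step in-plane/out-of-plane split. Your alternative orthogonal-decomposition remark at the end is essentially a coordinate version of the same spectral computation, and is closer in spirit to what the paper does.
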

\begin{proof}
In the following , let $a,b,x\in \mathbb{R}^d$, $\lvert a \rvert = \lvert b \rvert = \lvert x \rvert = 1$.
1) If $a+b = 0$, then
\begin{align*}
    \max_{\lvert x \rvert= 1, x \in \mathbb{R}^d }((a \cdot x) (b \cdot x) ) = \max_{\lvert x \rvert = 1, x \in \mathbb{R}^d }\big(- (a \cdot x)^2 \big)
    = 0 =\frac{1+ a \cdot b}{2}, 
\end{align*}
which is achieved by $x \in \langle a \rangle^\perp $.

2) If $a+b \neq 0$, for $x \in \langle a,b\rangle$, set $a\cdot b = \cos \alpha$ and $a\cdot x = \cos \theta$, with $0\leq \alpha<\pi$, $0\leq \theta \leq \pi$. Then $b\cdot x = \cos ( \theta \pm \alpha)$ and 
\begin{align*}
    (a \cdot x) (b \cdot x) = \cos \theta \cos ( \theta \pm \alpha) = \frac{1}{2} \big(\cos ( 2\theta \pm \alpha) + \cos \alpha \big).
\end{align*}
It follows that 
\begin{align*}
    \max_{\lvert x \rvert= 1, x \in \langle a,b\rangle }((a \cdot x) (b \cdot x) ) = \frac{1}{2} (1 + \cos \alpha ) =  \frac{1+ a \cdot b}{2}>0,
\end{align*}
achieved by $\theta =  \frac{\alpha}{2} \mbox{ or } \pi - \frac{\alpha }{2}$, in other words, $x = \pm \frac{a+b}{\lvert a+b \rvert}$.

For $\bar x \notin \langle a,b\rangle$, $\rvert \bar x \lvert = 1$, set $\bar x = cy + z$, with $y \in \langle a,b\rangle, \lvert y \rvert =1 $, $z \in \langle a,b\rangle^\perp $, and $0\leq c<1$. We have 
\begin{align*}
    (a \cdot \bar x) (b \cdot \bar x) = c^2 (a \cdot y) (b \cdot y) \le
    c^2\frac{1+ a \cdot b}{2}<\frac{1+ a \cdot b}{2},
\end{align*}
and the assertions follow immediately.
\end{proof}

\end{appendices}
\end{document}